\documentclass[12pt,reqno]{amsart}

\newcommand\version{October 28, 2021}


\usepackage{amsmath,amsfonts,amsthm,amssymb,amsxtra}
\usepackage{bbm} 



\setlength{\voffset}{-.7truein}
\setlength{\textheight}{8.8truein}
\setlength{\textwidth}{6.05truein}
\setlength{\hoffset}{-.7truein}


\newtheorem{theorem}{Theorem}
\newtheorem{proposition}[theorem]{Proposition}
\newtheorem{lemma}[theorem]{Lemma}

\theoremstyle{definition}

\theoremstyle{remark}




\renewcommand{\epsilon}{\varepsilon}

\newcommand{\N}{\mathbb{N}}

\renewcommand{\phi}{\varphi}
\newcommand{\R}{\mathbb{R}}

\newcommand{\Sph}{\mathbb{S}}

\newcommand{\Z}{\mathbb{Z}}


\begin{document}

\title[Reverse Sobolev inequalities --- \version]{Reverse conformally invariant Sobolev inequalities on the sphere}

\author{Rupert L. Frank}
\address[Rupert L. Frank]{Mathematisches Institut, Ludwig-Maximilians-Universit\"at M\"unchen, Theresienstr. 39, 80333 M\"unchen, Germany, and Mathematics 253-37, Caltech, Pasa\-de\-na, CA 91125, USA}
\email{r.frank@lmu.de, rlfrank@caltech.edu}

\author{Tobias K\"onig}
\address[Tobias K\"onig]{Institut de Mathématiques de Jussieu -- Paris Rive Gauche	Université de Paris - Campus des Grands Moulins, Bâtiment Sophie Germain, Boite Courrier 7012,	8 Place Aurélie Nemours, 75205 Paris Cedex 13, France}
\email{koenig@imj-prg.fr}

\author{Hanli Tang}
\address[Hanli Tang]{Laboratory of Mathematics and Complex Systems (Ministry of Education), School of Mathematical Sciences, Beijing Normal University, Beijing, 100875, China}
\email{hltang@bnu.edu.cn}

\thanks{\copyright\, 2021 by the authors. This paper may be reproduced, in its entirety, for non-commercial purposes.\\
Partial support through US National Science Foundation grants DMS-1363432 and DMS-1954995 and Deutsche Forschungsgemeinschaft grant EXC-2111-390814868 (R.L.F.), Studienstiftung des deutschen Volkes and ANR BLADE-JC ANR-18-CE40-002 (T.K.) and National Natural Science Foundation of China (Grant No.11701032) and National Key Research and Development Program of China (Grant No. 2020YFA0712900) (H.T.) is acknowledged.}

\begin{abstract}
	We consider the optimization problem corresponding to the sharp constant in a conformally invariant Sobolev inequality on the $n$-sphere involving an operator of order $2s> n$. In this case the Sobolev exponent is negative. Our results extend existing ones to noninteger values of $s$ and settle the question of validity of a corresponding inequality in all dimensions $n\geq 2$.
\end{abstract}

\maketitle

\section{Introduction and main results}

We are interested in sharp constants in conformally invariant Sobolev inequalities. The classical version of this inequality concerns powers $(-\Delta)^s$ of the Laplacian in $\R^n$ with a real parameter $0<s<\frac n2$ and it reads
\begin{equation}
	\label{eq:sob}
	\left\|(-\Delta)^{s/2} U \right\|_2^2 \geq \mathcal S_{s,n} \| U\|_{\frac{2n}{n-2s}}^2
	\qquad\text{for all}\ U\in \dot H^s(\R^n)
\end{equation}
with
\begin{equation}
	\label{eq:sobconst}
	\mathcal S_{s,n} = (4\pi)^s \ \frac{\Gamma(\frac{n+2s}{2})}{\Gamma(\frac{n-2s}{2})} \left( \frac{\Gamma(\frac n2)}{\Gamma(n)} \right)^{2s/n}
	= \frac{\Gamma(\frac{n+2s}{2})}{\Gamma(\frac{n-2s}{2})} \ |\Sph^n|^{2s/n} \,.
\end{equation}
This inequality was proved in an equivalent, dual form by Lieb in \cite{Li}, where also the cases of equality were characterized. Moreover, in that work a fundamental property of \eqref{eq:sob}, namely its conformal invariance, was discovered and exploited. This result extends the earlier result in the local case $s=1$ going back to \cite{Rod,Ro,Au,Ta}.

Since $\R^n$ (or rather $\R^n\cup\{\infty\}$) and $\Sph^n$ are conformally equivalent, there is an equivalent version of \eqref{eq:sob} on $\Sph^n$. This form was found explicitly by Beckner in \cite[Eq.~(19)]{Be1993}, namely,
\begin{eqnarray}
	\label{eq:sobsphere}
	\left\| A_{2s}^{1/2} u \right\|_2^2 \geq \mathcal S_{s,n} \|u\|_{\frac{2n}{n-2s}}^2
	\qquad\text{for all}\ u\in H^s(\Sph^n)
\end{eqnarray}
with the same constant $\mathcal S_{s,n}$ as in \eqref{eq:sobconst} and with
\begin{align}
	\label{eq:opas}
	A_{2s} = \frac{\Gamma(B+\tfrac12 + s)}{\Gamma(B+\tfrac12 - s)}
	\qquad\text{and}\qquad
	B = \sqrt{-\Delta_{\Sph^n} + \tfrac{(n-1)^2}{4}} \,.
\end{align}
Note that the operators $B$ and $A_{2s}$ act diagonally in any basis of spherical harmonics, and on spherical harmonics of degree $\ell\in\N_0=\{0,1,2,\ldots\}$, the operator $B$ acts by multiplication with $\ell+\frac{n-1}{2}$ and, consequently, $A_{2s}$ acts by multiplication with
\begin{equation}
	\label{eq:alpha}
	\alpha_{2s,n}(\ell) = \frac{\Gamma(\ell+\frac n2 + s)}{\Gamma(\ell+\frac n2 - s)} \,.
\end{equation}
The operators $A_{2s}$ can be thought of as $(-\Delta_{\Sph^n})^{s}$ perturbed by lower order terms. For integer $s$, they are related to the GJMS operators in conformal geometry \cite{GrJeMaSp}. 

Note that as $s\nearrow \frac n2$, the integrability exponent $\frac{2n}{n-2s}$ in \eqref{eq:sob} and \eqref{eq:sobsphere} tends to $+\infty$. In \cite{Be1993} Beckner derived a conformally invariant endpoint inequality for $s=\frac n2$, which extends \cite{LeMi,On,OsPhSa}; see also \cite{CaLo} for an equivalent, dual inequality. In passing, we mention that in \cite{Be1992} Beckner also proved a conformally invariant endpoint inequality for $s=0$.

Our goal in this paper is to investigate the range
$$
s>\frac n2 \,.
$$
Note that in this case the integrability exponent $\frac{2n}{n-2s}$ in \eqref{eq:sobsphere} is negative, and therefore we will restrict ourselves to functions that are positive almost everywhere. It is because of this sign change that we call the inequalities in this paper `reverse' Sobolev inequalities.

The operators $A_{2s}$ are well-defined in the whole range $s>0$, provided one sets $\alpha_{2s,n}(\ell)=0$ when the denominator in \eqref{eq:alpha} has a pole. Note, however, that the operators $A_{2s}$ are no longer positive definite and therefore we define
$$
a_{2s}[u] := \sum_{\ell\in\N_0} \alpha_{2s,n}(\ell) \| P_\ell u\|_2^2
\qquad\text{for all}\ u \in H^s(\Sph^n) \,,
$$
where $P_\ell$ is the projection onto spherical harmonics of degree $\ell$. Note that when $s\leq\frac n2$, then $a_{2s}[u] = \| A_{2s}^{1/2} u \|_2^2$ for all $u \in H^s(\Sph^n)$. 

In the following we will study inequalities of the type
\begin{equation}
	\label{eq:sobsphererev}
	a_{2s}[u] \geq \mathcal S_{s,n} \left( \int_{\Sph^n} u^{-\frac{2n}{2s-n}} d\omega \right)^{-\frac{2s-n}{n}}
	\qquad\text{for all}\ 0<u\in H^s(\Sph^n) \,.
\end{equation}
We are interested in whether such an inequality holds with some finite constant $\mathcal S_{s,n}$ (not necessarily positive) and, if so, what the optimal value of this constant is.

A first inequality of this type, corresponding to $s=1$ in $n=1$, is shown in \cite{ExHaLo} and reads
$$
\int_{-\pi}^\pi \left( (u')^2 - \frac14 u^2\right)d\theta \geq -\pi^2 \left( \int_{-\pi}^\pi u^{-2}\,d\theta \right)^{-1}
\qquad\text{for all}\ u\in H^1(\R/2\pi\Z) \,.
$$
An independent proof of this inequality and a characterization of the cases of equality appears in \cite{AiChWe}. The case $s=2$ in $n=3$ is analyzed in \cite{YaZh}; see also \cite{HaYa}. The paper \cite{Ha} by Hang treats all cases $s\in\N\cap (\frac n2,\infty)$ in general dimensions $n\geq 1$ (here, $\N=\{1,2,3,\ldots\}$); for the cases $s=1,2$ in $n=1$, see also \cite{NiZh}. All these cases treated so far correspond to integer $s$, when $A_{2s}$ is a differential operator. 

In the above mentioned works it was established that inequality \eqref{eq:sobsphererev} is valid, with the constant given by \eqref{eq:sobconst}, when restricted to positive functions, provided that $s=\frac{n+1}{2}$, $\frac{n+3}{2}$ when $n$ is odd and $s=\frac n2+\N_0$ when $n$ is even. For odd $n$ equality is achieved exactly for the constant function, modulo conformal transformations, and for even $n$ exactly for positive linear combinations of spherical harmonics of degree $\leq s-\frac n2$. Moreover, a rather surprising result in \cite{Ha} is that for odd $n$ and $s\in \frac{n+5}{2}+\N_0$, the infimum
\begin{equation}
	\label{eq:infimum}
	I_{2s,n} := \inf_{0< u \in H^s(\Sph^n)} \left( \int_{\Sph^n} u^{-\frac{2n}{2s-n}} d\omega \right)^{\frac{2s-n}{n}} a_{2s}[u]
\end{equation}
is \emph{not} achieved and, in fact, there is not even a local minimum. As far as we know, this is one of the very few instances of conformally invariant functional inequalities on $\Sph^n$ without minimizers.

While the fundamental works \cite{ExHaLo,AiChWe,YaZh,HaYa,Ha}  answer many questions concerning the family of inequalities \eqref{eq:sobsphererev}, two natural ones remain open. (a) Do these results extend to all real values of the parameter $s>\frac n2$ and, if so, where does the transition between existence and nonexistence of a minimizer occur? (b) If there is no minimizer for \eqref{eq:infimum}, what is the value of the infimum?

In this paper we completely answer question (a) and, in dimension $n\geq 2$, also question (b). 

The following two theorems are our main results.

\begin{theorem}\label{main1}
	Let $n\geq 1$ and $s\in (\frac n2,\frac{n+4}2)\cup (\frac n2+ \N)$. Then for all $0\leq u\in H^s(\Sph^n)$ with $u^{-\frac{2n}{2s-n}}\in L^1(\Sph^n)$,
	\begin{equation}
		\label{eq:mainineq}
		a_{2s}[u] \left( \int_{\Sph^n} u^{-\frac{2n}{2s-n}}\,d\omega \right)^{\frac{2s-n}{n}} \geq \frac{\Gamma(\frac n2 + s)}{\Gamma(\frac n2 - s)}\ |\Sph^n|^\frac{2s}n \,.
	\end{equation}
	For $s\in (\frac n2,\frac{n+4}2)\setminus \{\frac{n+2}2\}$, equality is attained if and only if
	$$
	u(\omega) = c \left(1-\zeta\cdot\omega \right)^\frac{2s-n}{2}
	$$
	for some $c>0$ and $\zeta\in\R^{n+1}$ with $|\zeta|<1$. For $s\in\frac n2+ \N$, equality is attained if and only if $u$ is in the linear span of spherical harmonics of degree $\leq s - \frac n2$.
\end{theorem}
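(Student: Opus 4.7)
I would split the argument according to the two ranges of $s$.

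The integer case $s=\frac n2+k$ with $k\in\N$ reduces to an explicit $\Gamma$-function computation: the denominator $\Gamma(\ell+\frac n2-s)=\Gamma(\ell-k)$ in \eqref{eq:alpha} has poles at $\ell=0,1,\dots,k$, forcing $\alpha_{2s,n}(\ell)=0$ there and $\alpha_{2s,n}(\ell)>0$ for $\ell>k$, so $a_{2s}[u]\geq 0$ with equality iff $u$ lies in the span of spherical harmonics of degree $\leq k$. Since $\Gamma(\frac n2-s)=\Gamma(-k)$ is a pole, the right-hand side of \eqref{eq:mainineq} vanishes and the inequality together with its equality cases follows at once.

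For non-integer $s\in(\frac n2,\frac{n+4}2)\setminus\{\frac{n+2}2\}$ I would work with the scaling- and conformally invariant functional
$$
F(u):=a_{2s}[u]\Bigl(\int_{\Sph^n} u^{-p}\,d\omega\Bigr)^{(2s-n)/n},\qquad p:=\tfrac{2n}{2s-n}.
$$
Its invariance under $u\mapsto c\,J_\phi^{(n-2s)/(2n)}\,u\circ\phi$ (for $c>0$ and a M\"obius transformation $\phi$ of $\Sph^n$), together with the fact that this action sends $u\equiv 1$ to the family $c\,(1-\zeta\cdot\omega)^{(2s-n)/2}$ with $|\zeta|<1$, reduces \eqref{eq:mainineq} to $F(u)\geq F(1)$. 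A direct expansion $u=1+\epsilon v$ with $v=\sum_{\ell\geq 1}v_\ell$ orthogonal to constants yields
$$
F(1+\epsilon v)=F(1)+\epsilon^2\,|\Sph^n|^{(2s-n)/n}\sum_{\ell\geq 1}c_\ell\,\|v_\ell\|_2^2+O(\epsilon^3),\qquad c_\ell:=\alpha_{2s,n}(\ell)+(p+1)\alpha_{2s,n}(0).
$$
Using the $\Gamma$-recursion $\alpha_{2s,n}(\ell+1)/\alpha_{2s,n}(\ell)=(\ell+\frac n2+s)/(\ell+\frac n2-s)$ one finds $c_1=0$ (the $\ell=1$ conformal degeneracy) and, by a sign analysis on the two subranges $s\in(\frac n2,\frac{n+2}2)$ and $s\in(\frac{n+2}2,\frac{n+4}2)$, $c_\ell>0$ for every $\ell\geq 2$ precisely when $s<\frac{n+4}2$. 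Thus the conformal orbit of $u\equiv 1$ is a strict local minimum of $F$, and evaluating at $u\equiv 1$ gives the value $F(1)=\frac{\Gamma(n/2+s)}{\Gamma(n/2-s)}|\Sph^n|^{2s/n}$ claimed in \eqref{eq:mainineq}.

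To pass from local to global minimality I would carry out a concentration-compactness argument on the quotient by the conformal group: a minimizing sequence, normalized by a M\"obius transformation so that, say, the dual first moment $\int_{\Sph^n}\omega\,u^{-p}\,d\omega$ vanishes, is precompact in $H^s(\Sph^n)$ thanks to the Hessian coercivity on the $\ell\geq 2$ modes, yielding a minimizer $u^*$. Any such $u^*$ solves the Euler--Lagrange equation $A_{2s} u^*=\lambda\,(u^*)^{-p-1}$, which on the stereographic image in $\R^n$ becomes a nonlocal equation amenable to the moving-planes method in its Chen--Li--Ou integral-equation form; this forces $u^*$ to lie in the conformal orbit of constants, completing the classification and hence the proof.

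The main obstacle is this global existence step: the indefiniteness of $a_{2s}$ for $s>\frac n2$, the negative integrability exponent $-p<0$, and the non-compactness of the conformal group together obstruct routine weak-compactness arguments. The restriction $s<\frac{n+4}2$ is sharp, because at $s=\frac{n+4}2$ one has $c_2=0$ and beyond this threshold $c_2<0$, destabilizing the local minimum; in fact, in the integer case Hang showed that the infimum ceases to be attained for $s\geq\frac{n+5}2$ in odd $n$. A secondary technical difficulty is to carry out the moving-planes classification in the nonlocal setting required for non-integer $s$.
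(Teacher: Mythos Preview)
Your integer-case argument matches the paper's, and the classification step via the Euler--Lagrange integral equation and moving spheres is essentially the paper's approach (the paper cites Li's classification result rather than Chen--Li--Ou). Your second-variation computation is also correct; the paper carries out the identical calculation, but in the proof of Theorem~\ref{main2} rather than Theorem~\ref{main1}.

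The genuine gap is the existence step. You assert that a conformally normalized minimizing sequence ``is precompact in $H^s(\Sph^n)$ thanks to the Hessian coercivity on the $\ell\geq 2$ modes,'' but Hessian coercivity is a \emph{local} statement near the orbit of constants and says nothing about a sequence that may be far from that orbit. Concretely: after any conformal normalization one can extract a weak $H^s$ limit $u$, but nothing you have written prevents $\min u=0$; then $\int u^{-p}\,d\omega=+\infty$ while, since $a_{2s}$ is indefinite, $a_{2s}[u]$ could a priori be negative, and no minimality or strong convergence follows. You correctly flag this as ``the main obstacle'' but offer no mechanism to resolve it.

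The paper's mechanism is Proposition~\ref{positivity}: if $u\in H^s(\Sph^n)$ satisfies $u(S)=0$ (and $\nabla u(S)=0$ when $s>\tfrac{n+2}{2}$), then $a_{2s}[u]\geq 0$, with equality only for an explicit one- or $(n{+}1)$-parameter family. The proof is by stereographic projection from $S$: the vanishing conditions place $u_{\mathcal S}$ in $\dot H^s(\R^n)$ and yield the identity $a_{2s}[u]=\int_{\R^n}|\xi|^{2s}|\widehat{u_{\mathcal S}}(\xi)|^2\,d\xi\geq 0$. With this in hand the paper argues as follows: if the weak limit $u$ of a normalized minimizing sequence has $\min u=0$, then combining lower semicontinuity with the sign of $I_{2s,n}$ forces $a_{2s}[u]=0$, so $u$ is explicitly identified; a further conformal dilation, chosen so that $\tilde u_j(N)=\tilde u_j(S)$, then produces a new minimizing sequence whose weak limit is shown to be strictly positive. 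This positivity-under-vanishing result is invisible to the second variation and is the essential idea missing from your outline.
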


Note that the constant on the right side of \eqref{eq:mainineq} coincides with $\mathcal S_{s,n}$ in \eqref{eq:sobconst}. It is negative for $s\in (\frac n2,\frac{n+2}2)$, positive for $s\in (\frac {n+2}2,\frac{n+4}2)$ and zero for $s\in\frac n2+ \N$.

\begin{theorem}\label{main2}
	Let $n\geq 1$ and $s\in (\frac{n+4}2,\infty)\setminus (\frac n2+ \N)$. Then the infimum \eqref{eq:infimum} is not attained. If, in addition, $n\geq 2$, then $I_{2s,n}=-\infty$.
\end{theorem}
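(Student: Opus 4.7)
The plan rests on a second-variation analysis at the constant function $u \equiv 1$, together with a degeneration argument in dimension $n \geq 2$. First I would set $u_\varepsilon = 1 + \varepsilon Y_\ell$ for a real spherical harmonic $Y_\ell$ of degree $\ell \geq 1$ (so that $\int Y_\ell\, d\omega = 0$) and Taylor-expand both factors of
$$
Q(u) := a_{2s}[u] \left(\int_{\Sph^n} u^{-2n/(2s-n)}\, d\omega\right)^{(2s-n)/n}.
$$
Using orthogonality of the spectral projections $P_\ell$, this yields, for small $\varepsilon$,
$$
Q(u_\varepsilon) - Q(1) = \varepsilon^2\, |\Sph^n|^{(2s-n)/n}\, \|Y_\ell\|_2^2 \left[\alpha_{2s,n}(\ell) + \alpha_{2s,n}(0)\, \tfrac{n+2s}{2s-n}\right] + O(\varepsilon^3).
$$
Denote the bracket by $\mathcal{C}_\ell$. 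From the identity $\alpha_{2s,n}(\ell)/\alpha_{2s,n}(0) = \prod_{j=0}^{\ell-1}(n/2+s+j)/(n/2-s+j)$ one immediately reads off $\mathcal{C}_1 = 0$ (the conformal-invariance direction), and direct algebra gives the factorizations
\begin{align*}
\mathcal{C}_2/\alpha_{2s,n}(0) &= \frac{2s(\tfrac n2+s)}{(s-\tfrac n2)(s-\tfrac n2-1)}, \\
\mathcal{C}_3/\alpha_{2s,n}(0) &= -\frac{2s(n+3)(\tfrac n2+s)}{(s-\tfrac n2)(s-\tfrac n2-1)(s-\tfrac n2-2)}.
\end{align*}
Both denominators are positive when $s > (n+4)/2$, so $\mathcal{C}_2$ and $\mathcal{C}_3$ carry opposite signs, and since $\alpha_{2s,n}(0) \neq 0$ for $s \notin \tfrac n2 + \N$, at least one of $\mathcal{C}_2, \mathcal{C}_3$ is strictly negative. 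This produces a spherical harmonic $Y_\ell$ with $\ell \in \{2,3\}$ along which $Q$ strictly decreases from $Q(1)$; in particular the constant $u \equiv 1$ is not a local minimum.

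For the non-attainment claim in any $n \geq 1$, I would combine this instability with a rigidity argument for the Euler--Lagrange equation $A_{2s} u_* = \mu\, u_*^{-(n+2s)/(2s-n)}$ satisfied by any putative minimizer. Using the conformal covariance of $A_{2s}$ and the classification of positive solutions (in parallel with Theorem \ref{main1}, whose minimizers are conformal images of constants), one reduces $u_*$ through a conformal transformation to a constant --- contradicting the local instability just established.

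For $I_{2s,n} = -\infty$ when $n \geq 2$, the plan is to construct $v_0 \geq 0$ in $H^s(\Sph^n)$ with $a_{2s}[v_0] < 0$ and a sufficiently degenerate zero at some point $e \in \Sph^n$ so that $v_0^{-2n/(2s-n)} \notin L^1(\Sph^n)$. A natural template is $v_0 = (1 - \omega \cdot e)^m + \tau\, w$ for an integer $m > (2s-n)/4$ (which guarantees that $v_0$ vanishes to order $2m$ near $e$, rendering $v_0^{-2n/(2s-n)}$ non-integrable there), with $w$ and $\tau$ chosen using the spectral picture above to enforce $a_{2s}[v_0] < 0$. Then the family $v_\delta := v_0 + \delta > 0$ satisfies $a_{2s}[v_\delta] \to a_{2s}[v_0] < 0$ and, by monotone convergence, $\int v_\delta^{-2n/(2s-n)}\, d\omega \to +\infty$ as $\delta \to 0^+$, whence $Q(v_\delta) \to -\infty$.

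The main obstacle I anticipate is the uniform (in $s$) choice of parameters $(m, \tau, w)$ realizing both the non-integrability and the negativity of $a_{2s}[v_0]$. This reduces to a finite-dimensional sign question for a linear combination of eigenvalues $\alpha_{2s,n}(0), \ldots, \alpha_{2s,n}(m)$ with spectral weights from $v_0$, and it is precisely here that the higher-dimensional spectral flexibility ($n \geq 2$) enters decisively; on $\Sph^1$ the available spherical harmonic content does not appear to suffice.
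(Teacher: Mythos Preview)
Your second-variation argument for non-attainment is correct and matches the paper's: the same computation of the Hessian at $u\equiv 1$ with $\phi$ a degree-$2$ or degree-$3$ spherical harmonic, followed by the reduction of any putative positive minimizer to a conformal image of a constant via the Euler--Lagrange equation and Li's classification. Your opposite-sign observation for $\mathcal{C}_2$ and $\mathcal{C}_3$ is a neat repackaging of the paper's explicit case split.

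For $I_{2s,n}=-\infty$ when $n\geq 2$, your overall scheme (find $v_0\geq 0$ in $H^s$ with $a_{2s}[v_0]<0$ and $v_0^{-2n/(2s-n)}\notin L^1$, then let $\delta\to 0^+$ in $v_0+\delta$) is exactly the paper's. The gap is in the construction of $v_0$, which you flag as the main obstacle but do not resolve. Your template $(1-\omega\cdot e)^m+\tau w$ with a zero at a single point is problematic for two reasons. First, $(1-\omega\cdot e)^m$ has spherical-harmonic content in \emph{every} degree $0,1,\ldots,m$, mixing positive- and negative-eigenvalue components of $A_{2s}$, so $a_{2s}<0$ is not automatic; and any sign-changing correction $\tau w$ threatens $v_0\geq 0$. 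Second, your non-integrability condition $m>(2s-n)/4$ is dimension-independent, so if this approach worked it would work equally for $n=1$, settling what the paper leaves open; this shows your diagnosis that $n\geq 2$ enters through ``spectral flexibility'' is off target.

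The paper's key idea is to let $v_0$ vanish on the \emph{equator} $\{\omega_{n+1}=0\}$ rather than at a point. When $2K<s-\tfrac n2<2K+1$ one takes $v_0(\omega)=\omega_{n+1}^{2K}$: by parity its Gegenbauer expansion contains only the even degrees $0,2,\ldots,2K$, which is exactly the set where $\alpha_{2s,n}(\ell)<0$, so $a_{2s}[v_0]<0$ is immediate and nonnegativity is obvious. Non-integrability then reduces to divergence of $\int_{-1}^1|t|^{-4nK/(2s-n)}(1-t^2)^{(n-2)/2}\,dt$, which in this range of $s$ holds precisely when $n\geq 2$; \emph{this} is where the dimension restriction actually enters. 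The case $2K+1<s-\tfrac n2<2K+2$ uses $v_0(\omega)=\omega_{n+1}^{2K}(1-\omega_{n+1})$ and requires an explicit computation with Gegenbauer coefficients to verify that the odd-degree (negative-eigenvalue) contributions dominate.
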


Theorem \ref{main1} for $s\in\frac n2+ \N$ is almost immediate from the definition of $a_{2s}$. In order to prove the theorem for $s\in (\frac n2,\frac{n+4}2)\setminus \{\frac{n+2}2\}$ we follow closely the strategy of Hang \cite{Ha}. Namely, first we prove existence of a minimizer and then we apply a result of Li \cite{Li04} characterizing all solutions to the corresponding Euler--Lagrange equation. In the proof of existence of minimizers one has to deal with the noncompact symmetry group of conformal transformations. To rule out loss of compactness modulo symmetries, an important role is played by the fact that $a_{2s}[u]\geq 0$ if $u$ vanishes at a point (together with its gradient if $s\in(\frac{n+2}{2},\frac{n+4}2)$). Similar results already appeared in \cite{ExHaLo,YaZh,HaYa,Ha}, where the authors dealt with local operators and could exploit integration by parts. In Proposition \ref{positivity} we prove the corresponding fact for general $s\in(\frac{n}{2},\frac{n+4}{2})$. We also proceed by going to $\R^n$, but the proof for noninteger $s$ is quite a bit more involved.

The first part of Theorem \ref{main2} follows again closely the strategy of Hang \cite{Ha} and also uses the result of Li \cite{Li04}. The second part answers a question that was left open in \cite{Ha} even in the integer case. The idea is to find a function $u\in H^s(\Sph^n)$ such that $u^{-\frac{2n}{2s-n}}\not\in L^1(\Sph^n)$ and $a_{2s}[u]<0$. Then using $u+\epsilon$ as trial functions for the infimum \eqref{eq:infimum} yields the assertion in Theorem \ref{main2}. The function $u$ that we choose vanishes to sufficiently high order on the equator $\{\omega_{n+1} =1\}$. Showing that the quadratic form is negative on this function, requires some rather explicit analysis involving spherical harmonics. The seemingly simpler question of whether $I_{2s,1}$ is finite or not for $n=1$, remains open.

\medskip

\emph{Background and open problems.} We end this introduction by putting our results into perspective and by mentioning some open problems.

\medskip

The work of Dou and Zhu \cite{DoZh} spiked a lot of interest in reversed Hardy--Littlewood--Sobolev (HLS) inequalities. The conformally invariant case of these inequalities states that for $\mu>0$ and nonnegative functions $F,G$ on $\R^n$,
\begin{equation}
	\label{eq:reversehls}
	\iint_{\R^n\times\R^n} F(x) |x-y|^\mu G(y) \,dx\,dy \geq \mathcal H_{\mu,n} \|F\|_{\frac{2n}{2n+\mu}} \|G\|_{\frac{2n}{2n+\mu}} \,.
\end{equation}
The optimal constant $\mathcal H_{\mu,n}>0$ and all optimizing functions $F,G$ were obtained in \cite{DoZh}; see also \cite{NgNg,ChLiLuTa}. By conformal invariance, \eqref{eq:reversehls} has an equivalent version on $\Sph^n$, namely,
\begin{equation}
	\label{eq:reversehlssphere}
	\iint_{\Sph^n\times\Sph^n} f(\omega) |\omega-\omega'|^\mu g(\omega') \,d\omega\,d\omega' \geq \mathcal H_{\mu,n} \|f\|_{\frac{2n}{2n+\mu}} \|g\|_{\frac{2n}{2n+\mu}} \,.
\end{equation}
For open questions in a non-conformally invariant case motivated by aggregation-diffusion equations, see \cite{CaDeDoFrHo,CaDeFrLe}.

While the (usual) HLS inequality studied in \cite{Li} is equivalent to the Sobolev inequality \eqref{eq:sob}, there seems to be no relation between \eqref{eq:reversehlssphere} and the inequality \eqref{eq:sobsphererev}. This is despite the fact that the integral kernel $|\omega-\omega'|^\mu$ appearing in \eqref{eq:reversehlssphere} is a multiple of the Green's function of the operator $A_{2s}$ with $\mu=2s-n$; see the proof of Lemma~\ref{confinv}. The fundamental difference between the (usual) HLS inequality and the reverse one is that the kernel is positive definite in the former case, but not in the latter. For inequalities \eqref{eq:reversehls} and \eqref{eq:reversehlssphere} optimizers exist for \emph{all} $\mu>0$ and one does not have an analogue of the nonexistence phenomenon in our Theorem \ref{main2}.

As we mentioned before, in our proof of Theorem \ref{main1} for $s\in (\frac n2,\frac{n+4}2)\setminus \{\frac{n+2}2\}$ we apply a result of Li \cite{Li04} and to do so, we use a relation between the Euler--Lagrange equations corresponding to \eqref{eq:reversehlssphere} and \eqref{eq:sobsphererev}. Interestingly, the analogue of this relation on Euclidean space may fail in the excluded case $s=\frac{n+2}{2}$; for an example with $n=2$ and $s=2$, see \cite{Zh}.

\medskip

Besides finding optimal constants and characterizing optimizers, a natural problem is to characterize all positive solutions of the corresponding Euler--Lagrange equation. For the Sobolev inequality \eqref{eq:sob} or, equivalently, for the corresponding HLS inequality, this was accomplished in \cite{ChLiOu}; see also \cite{Li04}. The latter paper also contains a characterization of solutions to the Euler--Lagrange equation corresponding to \eqref{eq:reversehls} and, in fact, just as in \cite{Ha} this will be a major ingredient in our proof of Theorem~\ref{main1}. For related classification results, see \cite{ChXu,HyWe,DoGuZh,FrKoTa} and references therein. In connection with this we emphasize that our Theorem \ref{main2} does not exclude that for $s\in(\frac 52,\infty)\setminus (\frac12 +\N)$ in $n=1$, the infimum in \eqref{eq:infimum} is attained for $u\in H^s(\Sph)$ with $u^{-\frac{2}{2s-1}}\in L^1(\Sph)$ and $\min u = 0$. We find it unlikely that such $u$ exist, but we cannot exclude their existence via \cite{Li04} since the Euler--Lagrange equation then only holds in $\Sph\setminus\{u=0\}$.

\medskip

After the works of Brezis and Lieb \cite{BrLi} and Bianchi and Egnell \cite{BiEg} and, in particular, in the last decade there has been an immense body of work concerning the quantitative stability of Sobolev and isoperimetric inequalities; see, e.g., \cite{FuMaPr,ChFrWe,CaFrLi,Fu,FiZh,BoDoNaSi,Fr} and references therein. It is natural to ask whether there are such stable versions of Theorem \ref{main1}. The computations with the linearization in the proof of Theorem \ref{main2} suggest that the answer is affirmative for $s\in(\frac n2,\frac{n+4}2)\setminus \{\frac{n+2}2\}$, but the precise form of such a purported inequality is unclear since the form $a_{2s}[u]$ is not positive semidefinite.

\medskip

Finally, we would like to mention the relation between the problem studied in this paper and conformal geometry. The sharp constant in the Sobolev inequality \eqref{eq:sob} with $s=1$ appears as a compactness threshold in the Yamabe problem on general manifolds \cite{Au}. The latter concerns the scalar curvature. Similarly, the case $s=2$ is related to the $Q$-curvature \cite{HaYa2} and generalized $Q$ curvatures were introduced in \cite{GrZw} for $0<s<\frac n2$; see also \cite{ChGo,CaCh,ChYa}. While (generalized) $Q$-curvature problems were originally considered for $s\leq \frac n2$, they are also meaningful for $s>\frac n2$ and, in fact, this was the original motivation for \cite{YaZh,HaYa}. Our Theorem \ref{main1} says that for $s\in (\frac n2,\frac{n+4}2)\cup (\frac n2 + \N)$, within the conformal class of the standard metric $g_{\Sph^n}$ on $\Sph^n$ and under the volume constraint $\mathrm{vol}_g(\Sph^n) = |\Sph^n|$, the standard metric maximizes the total generalized $Q$-curvature, defined by 
$$
Q_{2s,g} = -\frac{2}{2s-n}\, u^{\frac{2s+n}{2s-n}} A_{2s} u
\qquad\text{if}\ g = u^{-\frac{4}{2s-n}} g_{\Sph^n} \,.
$$
Our Theorem \ref{main1} plays the same role for the fractional order problems in \cite{ChGo,CaCh,ChYa} as the results in \cite{YaZh,HaYa} do in the $Q$-curvature problem on three-dimensional manifolds.


\section{Preliminaries}

\subsection{Conformal invariance}

In this subsection $n\geq 1$ and $s>\frac n2$ are fixed. Let $\Phi$ be a conformal transformation of $\Sph^n$ and, for a function $u$ on $\Sph^n$, set
$$
u_\Phi(\omega) = J_{\Phi}(\omega)^{-\frac{2s-n}{2n}} u(\Phi(\omega)) \,.
$$
Clearly, if $u$ is nonnegative and measurable, then
$$
\int_{\Sph^n} u_\Phi^{-\frac{2n}{2s-n}}\,d\omega = \int_{\Sph^n} u^{-\frac{2n}{2s-n}}\,d\omega \,.
$$

\begin{lemma}\label{confinv}
	If $u\in H^s(\Sph^n)$, then $u_\Phi\in H^s(\Sph^n)$ and
	$$
	a_{2s}[u_\Phi] = a_{2s}[u] \,.
	$$
\end{lemma}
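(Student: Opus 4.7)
The plan is to deduce the invariance of the quadratic form from the stronger conformal covariance identity
$$A_{2s}(u_\Phi) = J_\Phi^{(n+2s)/(2n)} (A_{2s} u) \circ \Phi,$$
where $u_\Phi = J_\Phi^{(n-2s)/(2n)} u\circ\Phi$ as in the statement. Assuming this, I would pair $A_{2s} u_\Phi$ against $u_\Phi$ in $L^2$ and use the multiplicative relation $J_\Phi^{(n-2s)/(2n)} \cdot J_\Phi^{(n+2s)/(2n)} = J_\Phi$ together with the change of variables $\eta = \Phi(\omega)$ to reduce $a_{2s}[u_\Phi]$ to $\int_{\Sph^n} u \cdot A_{2s} u\, d\eta = a_{2s}[u]$, after the Jacobian exactly cancels.

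To establish the covariance of $A_{2s}$, I would pass through its Green's function. The chordal distance on $\Sph^n$ transforms under a conformal $\Phi$ via the classical Möbius identity
$$|\Phi(\omega) - \Phi(\omega')| = J_\Phi(\omega)^{1/(2n)} J_\Phi(\omega')^{1/(2n)} |\omega - \omega'|,$$
verifiable by stereographic projection. A direct change-of-variables calculation, using this identity to transform the kernel and absorb the Jacobians into the weights, shows that the integral operator $G u(\omega) := \int_{\Sph^n} |\omega - \omega'|^{2s-n} u(\omega') \, d\omega'$ satisfies the dual intertwining
$$G\bigl(J_\Phi^{(n+2s)/(2n)} u\circ\Phi\bigr) = J_\Phi^{(n-2s)/(2n)} (Gu)\circ\Phi.$$
By the Funk--Hecke formula applied to the radial kernel $(2-2\omega\cdot\omega')^{(2s-n)/2}$ together with a standard Beta-function evaluation, $G$ acts on spherical harmonics of degree $\ell$ as multiplication by a positive constant $c_{s,n}$ times $\Gamma(\ell+\tfrac{n}{2}-s)/\Gamma(\ell+\tfrac{n}{2}+s) = 1/\alpha_{2s,n}(\ell)$ on every mode where $\alpha_{2s,n}(\ell)\neq 0$. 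Thus $G = c_{s,n} A_{2s}^{-1}$ on the orthogonal complement of $\ker A_{2s}$, and inverting the intertwining for $G$ yields the asserted intertwining for $A_{2s}$; combined with the pairing calculation, this gives the lemma. This computation also justifies the assertion in the introduction that $|\omega-\omega'|^{2s-n}$ is a multiple of the Green's function of $A_{2s}$.

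The main obstacle is the exceptional case $\alpha_{2s,n}(\ell) = 0$, which occurs precisely for $s\in\tfrac{n}{2}+\N$ and $\ell \leq s - \tfrac{n}{2}$: there $A_{2s}$ has a nontrivial kernel and the inversion argument for $G$ only gives the identity on the complement of $\ker A_{2s}$. These modes contribute zero to $a_{2s}[u]$, so one only needs to verify that no extra weight appears in $a_{2s}[u_\Phi]$ either. The cleanest workaround is analytic continuation in $s$: for $u$ a finite linear combination of spherical harmonics, both $a_{2s}[u_\Phi]$ and $a_{2s}[u]$ are meromorphic in $s\in\C$, and they coincide for $s\in (0,\tfrac{n}{2})$ by the classical conformal invariance of the Sobolev quadratic form, hence for all admissible $s$. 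A density argument, together with the fact that $u\mapsto u_\Phi$ is bounded on $H^s(\Sph^n)$ because $\Phi$ is a smooth diffeomorphism and $J_\Phi$ is smooth and positive, extends the identity to arbitrary $u\in H^s(\Sph^n)$.
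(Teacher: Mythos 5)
Your proposal is correct and follows essentially the same route as the paper: both identify $|\omega-\omega'|^{2s-n}$ as the kernel of $A_{2s}^{-1}$ via Funk--Hecke, use the M\"obius identity for the chordal distance to obtain the intertwining $A_{2s}u_\Phi=(A_{2s}u)^\Phi$, and conclude by pairing against $u_\Phi$ and changing variables. The paper likewise handles the degenerate values $s\in\frac n2+\N$ by a limiting argument in $s$, so your analytic-continuation workaround is in the same spirit.
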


\begin{proof}
	We prove the lemma under the assumption $s\not\in \frac n2+\N$, which implies the general result by a limiting argument. This assumption implies that $\alpha_{2s,n}(\ell)\neq 0$ for all $\ell\in\N_0$. Moreover, by Stirling's formula, $\alpha_{2s,n}(\ell)$ grows like $\ell^{2s}$. Thus, $A_{2s} = \Gamma(B+\frac12+s)/\Gamma(B+\frac12 -s)$ is invertible as an operator from $H^{-s}(\Sph^n)$ to $H^s(\Sph^n)$. The Funk--Hecke formula implies that if $Y$ is a spherical harmonic of degree $\ell\in\N_0$, then
	$$
	\int_{\Sph^n} |\omega - \omega'|^{2s-n} Y(\omega')\,d\omega' = \frac{2^{2s} \pi^\frac n2\,\Gamma(s)}{\Gamma(\frac n2 - s)} \frac{1}{\alpha_{2s,n}(\ell)} Y(\omega) \,;
	$$
	see \cite[Eq.~17]{Be1993} and also \cite[Cor.~4.3]{FrLi}. Consequently, $A_{2s}^{-1}$ is an integral operator with integral kernel
	$$
	\frac{\Gamma(\frac n2 - s)}{2^{2s} \pi^\frac n2\,\Gamma(s)}\ |\omega - \omega'|^{2s-n} \,.
	$$
	Using this formula, together with the fact that
	$$
	J_\Phi(\omega)^\frac 1n |\omega - \omega'|^2 J_\Phi(\omega')^\frac 1n = |\Phi(\omega) - \Phi(\omega')|^2 \,,
	$$
	we easily see that for any $v\in H^{-s}(\Sph^n)$
	$$
	A_{2s}^{-1} v^\Phi = ( A_{2s}^{-1} v )_\Phi \,,
	$$
	where we set
	$$
	v^\Phi(\omega) := J_{\Phi}(\omega)^{\frac{2s+n}{2n}} v(\Phi(\omega)) \,.
	$$
	This is equivalent to
	$$
	A_{2s} u_\Phi = (A_{2s} u)^\Phi \,.
	$$
	Multiplying this formula by $u_\Phi$ and integrating we obtain the claim.
\end{proof}


\subsection{Stereographic projection}

In the previous subsection we considered the behavior of $A_{2s}$ under a conformal transformation of $\Sph^n$. In this subsection we consider its behavior under stereographic projection. Throughout this subsection we fix $n\geq 1$ and $s\in(0,\infty)\setminus(\frac n2 + \N_0)$.

We introduce the (inverse) stereographic projection $\mathcal S:\R^n\to\Sph^n$ by
$$
\mathcal S_j(x) = \frac{2x_j}{1+|x|^2} \,,\ j=1,\ldots, n\,,
\qquad
\mathcal S_{n+1}(x) = \frac{1-|x|^2}{1+|x|^2} \,.
$$
Given a function $u$ on $\Sph^n$, we define two functions $u_\mathcal S$ and $u^\mathcal S$ on $\R^n$ by
\begin{equation}
	\label{eq:stereofcn}
	u_\mathcal S(x) = \left( \frac{1+|x|^2}{2} \right)^\frac{2s-n}{2} u(\mathcal S(x)) \,,
	\qquad
	u^\mathcal S(x) = \left( \frac{1+|x|^2}{2} \right)^{-\frac{2s+n}{2}} u(\mathcal S(x)) \,.
\end{equation}
Note that, since $(2/(1+|x|^2))^n$ is the Jacobian of $\mathcal S$, these formulas are similar to those appearing in Lemma \ref{confinv} and its proof.

\begin{lemma}\label{stereoinv}
	Let $s=N+\sigma$ with $N\in\N_0$ and $\sigma\in[0,1)$. If $n\geq 2$, then
	\begin{equation}
		\label{eq:confinvstereo}
		(-\Delta)^{-\sigma} \left( A_{2s} u \right)^\mathcal S = (-\Delta)^N u_\mathcal S
		\qquad\text{for all}\ u\in C^\infty(\Sph^n) \,,
	\end{equation}
	The same identity holds if $n=1$ and $\sigma\in[0,\frac12)$. If $n=1$ and $\sigma\in(\frac12,1)$, then
	\begin{equation}
		\label{eq:confinvstereo1}
		(-\tfrac{d^2}{dx^2})^{-\sigma+\frac12} H \left( A_{2s} u \right)^\mathcal S = \tfrac{d^{2N+1}}{dx^{2N+1}} u_\mathcal S
		\qquad\text{for all}\ u\in C^\infty(\Sph) \,,
	\end{equation}
	where $H$ is multiplication in Fourier space by $i\xi/|\xi|$.
\end{lemma}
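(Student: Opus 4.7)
The plan is to reduce the identity to flat space via the integral representation of $A_{2s}^{-1}$ derived in the proof of Lemma~\ref{confinv}. Applied to $v = A_{2s} u$ and pulled back by $\omega = \mathcal S(x)$, $\omega' = \mathcal S(y)$ -- using $|\mathcal S(x)-\mathcal S(y)|^2 = 4|x-y|^2/((1+|x|^2)(1+|y|^2))$, $d\omega' = (2/(1+|y|^2))^n\,dy$, and the definitions \eqref{eq:stereofcn} -- all conformal weights cancel and one obtains the clean flat identity
\begin{equation*}
u_\mathcal S(x) = \frac{\Gamma(\tfrac{n}{2}-s)}{2^{2s}\pi^{n/2}\Gamma(s)} \int_{\R^n} |x-y|^{2s-n}\,(A_{2s}u)^\mathcal S(y)\,dy, \qquad u \in C^\infty(\Sph^n).
\end{equation*}
The integral converges absolutely because $(A_{2s}u)^\mathcal S(y) = O(|y|^{-(2s+n)})$ and the kernel has a nonnegative exponent, hence is locally bounded.

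Next I would apply $(-\Delta)^N$ to both sides. Iterating the elementary identity $-\Delta|x|^\alpha = -\alpha(\alpha+n-2)|x|^{\alpha-2}$ yields, for $x\neq y$,
\begin{equation*}
(-\Delta_x)^N|x-y|^{2s-n} = (-1)^N \prod_{k=0}^{N-1}(2s-n-2k)(2s-2-2k)\cdot|x-y|^{2\sigma-n};
\end{equation*}
differentiation under the integral is justified by the decay of $(A_{2s}u)^\mathcal S$ and smoothness of the kernel away from the diagonal. When $n\geq 2$, or $n=1$ with $\sigma\in[0,\tfrac12)$, one has $2\sigma<n$, and the kernel $|x-y|^{2\sigma-n}$ is, up to $\Gamma(\tfrac n2-\sigma)/(2^{2\sigma}\pi^{n/2}\Gamma(\sigma))$, the Riesz kernel of $(-\Delta)^{-\sigma}$. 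Combining the constants gives $(-\Delta)^N u_\mathcal S = C\cdot(-\Delta)^{-\sigma}(A_{2s}u)^\mathcal S$ with $C$ an explicit ratio of gamma factors. Applying Euler reflection $\Gamma(z)\Gamma(1-z) = \pi/\sin(\pi z)$ to both $\Gamma(\tfrac n2-s)\Gamma(s-\tfrac n2+1)$ and $\Gamma(\tfrac n2-\sigma)\Gamma(\sigma-\tfrac n2+1)$, together with $\sin(\pi(\tfrac n2-s)) = (-1)^N\sin(\pi(\tfrac n2-\sigma))$, the parasitic $(-1)^N$ factors cancel, $C=1$, and \eqref{eq:confinvstereo} follows.

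For the remaining case $n=1$ and $\sigma\in(\tfrac12,1)$, the reduced kernel $|x-y|^{2\sigma-1}$ has positive exponent, so cannot be identified with a Riesz potential directly. The fix is to apply one more derivative: take $d^{2N+1}/dx^{2N+1}$ of the integral representation of $u_\mathcal S$. Direct computation gives
\begin{equation*}
\frac{d^{2N+1}}{dx^{2N+1}}|x-y|^{2s-1} = \frac{\Gamma(2s)}{\Gamma(2\sigma-1)}\sgn(x-y)|x-y|^{2\sigma-2},
\end{equation*}
with exponent $2\sigma-2\in(-1,0)$ so the integral converges. The odd, $(2\sigma-2)$-homogeneous tempered distribution $\sgn(x)|x|^{2\sigma-2}$ has Fourier transform $2i\Gamma(2\sigma-1)\cos(\pi\sigma)\sgn(\xi)|\xi|^{1-2\sigma}$ (via the Mellin integral $\int_0^\infty t^{2\sigma-2}\sin t\,dt = -\Gamma(2\sigma-1)\cos(\pi\sigma)$), which coincides, up to the constant $2\Gamma(2\sigma-1)\cos(\pi\sigma)$, with the Fourier multiplier of $(-\tfrac{d^2}{dx^2})^{1/2-\sigma}H$. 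Collapsing all prefactors using the Gauss duplication formula $\Gamma(2s) = (2\pi)^{-1/2}2^{2s-1/2}\Gamma(s)\Gamma(s+\tfrac12)$, the reflection $\Gamma(\tfrac12-s)\Gamma(\tfrac12+s) = \pi/\cos(\pi s)$, and $\cos(\pi s) = (-1)^N\cos(\pi\sigma)$ again causes the signs to cancel, giving \eqref{eq:confinvstereo1}.

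The main difficulty is purely computational: the proof is conceptually just iterated differentiation of an explicit kernel, but the resulting constants are assembled from multiple gamma factors whose signs depend on the parity of $N$ in a delicate way. The key combinatorial fact that makes the formulas hold is that the sign $(-1)^N$ arising from $N$ iterations of $-\Delta|x|^\alpha$ (or $2N+1$ of $d/dx$ in the 1D endpoint case) is exactly compensated by the sign produced by the reflection identity applied to the gamma factors in the constant, in either case through the relation $s = N+\sigma$.
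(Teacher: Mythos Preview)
Your approach is correct and essentially the same as the paper's: both start from the flat integral representation $u_\mathcal S(x)=\tfrac{\Gamma(n/2-s)}{2^{2s}\pi^{n/2}\Gamma(s)}\int_{\R^n}|x-y|^{2s-n}(A_{2s}u)^\mathcal S(y)\,dy$ and then apply $(-\Delta)^N$ (respectively $d^{2N+1}/dx^{2N+1}$ when $n=1$, $\sigma\in(\tfrac12,1)$) to the kernel, identifying the result with the Riesz potential $(-\Delta)^{-\sigma}$ (respectively $(-\tfrac{d^2}{dx^2})^{-\sigma+1/2}H$). The only organizational difference is that the paper tracks the constant by induction on $N$ (reducing $s\mapsto s-1$ at each step), whereas you compute the product $\prod_{k=0}^{N-1}(2s-n-2k)(2s-2-2k)$ in closed form and collapse it via the reflection formula; both routes yield the same constant. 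Two minor remarks: your claim that the kernel ``has a nonnegative exponent, hence is locally bounded'' presupposes $s\geq n/2$, while the lemma is stated for all $s\in(0,\infty)\setminus(\tfrac n2+\N_0)$ --- for $s<n/2$ one instead uses local integrability of $|x-y|^{2s-n}$; and the justification for differentiating under the integral deserves the same inductive care the paper gives it (at each step the differentiated kernel remains locally integrable since its exponent stays above $-n$), rather than a single appeal to ``smoothness away from the diagonal.''
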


 The proof will show that both sides of \eqref{eq:confinvstereo} and \eqref{eq:confinvstereo1} are continuous functions and that the identities hold pointwise.

We note that \eqref{eq:confinvstereo} and \eqref{eq:confinvstereo1} are precise versions of the `heuristic formula'
\begin{equation}
	\label{eq:confinvstereoheuristic}
	\left( A_{2s} u \right)^\mathcal S = (-\Delta)^s u_\mathcal S \,,
\end{equation}
which is analogous to the formula in the proof of Lemma \ref{confinv}. When trying to directly prove \eqref{eq:confinvstereoheuristic}, we ran into technical problems concerning the convolution of two tempered distributions. This can be circumvented by proving the less elegant formulas \eqref{eq:confinvstereo} and \eqref{eq:confinvstereo1}, which are just as good for our purposes.

\begin{proof}
	\emph{Step 1.} As a preparation we prove the following assertion, still assuming $s\in (0,\infty)\setminus (\frac n2+\N_0)$. If $n\geq 2$, then, for any measurable $f$ on $\R^n$ such that $|f(x)|\lesssim \langle x \rangle^{-2s-n}$,
	$$
	(-\Delta)^N \int_{\R^n} |x-x'|^{2s-n} f(x')\,dx' = \frac{2^{2s} \pi^\frac n2 \Gamma(s)}{\Gamma(\frac n2-s)}\, ((-\Delta)^{-\sigma} f)(x) \,.
	$$
	For $n=1$ and $\sigma\in[0,\frac12)$, the same assertion is true, while for $\sigma\in(\frac12,1)$ one has
	$$
	\frac{d^{2N+1}}{dx^{2N+1}} \int_{\R} |x-x'|^{2s-1} f(x')\,dx' = \frac{2^{2s} \pi^\frac 12 \Gamma(s)}{\Gamma(\frac 12-s)}\, \left( (-\tfrac{d^2}{dx^2})^{-\sigma+\frac12} H f\right)(x) \,.
	$$
	
	We prove this by induction on $N$. For $N=0$ and, if $n=1$, $s<\frac12$, this is a standard result; see, e.g., \cite[Theorem 5.9 and Corollary 5.10]{LiLo}. For $n=1$ and $\frac12<s<1$, using dominated convergence one easily sees that $x\mapsto \int_{\R} |x-x'|^{2s-1} f(x')\,dx'$ is $C^1$ and
	$$
	\frac{d}{dx} \int_{\R} |x-x'|^{2s-1} f(x')\,dx' = (2s-1) \int_\R |x-x'|^{2s-3} (x-x') f(x')\,dx' \,.
	$$
	Note that the integral kernel on the right side is locally integrable. The claimed identity then follows from the identity
	$$
	(2s-1) |x|^{2s-3}x = \frac{2^{2s} \pi^\frac 12 \Gamma(s)}{\Gamma(\frac 12-s)} \int_\R |\xi|^{-2s} i\xi e^{i\xi x}\,d\xi \,,
	$$
	where the right side exists as an improper Riemann integral. This identity can either be proved directly by moving the integration contour to the positive imaginary axis and using identities for the Gamma function, or by analytic continuation from the identity implicit in the above proof for $0<s<\frac12$.
	
	Now let us assume $N\geq 1$. Using dominated convergence, one easily verifies that $x\mapsto \int_{\R^n} |x-x'|^{2s-n} f(x')\,dx'$ is $C^2$ and that
	$$
	\Delta \int_{\R^n} |x-x'|^{2s-n} f(x')\,dx' = 4(s-\tfrac n2)(s-1) \int_{\R^n} |x-x'|^{2s-n-2} f(x')\,dx' \,.
	$$
	By induction, one concludes that, if either $n\geq 2$ or if $n=1$ and $\sigma<\frac12$,
	\begin{align*}
		(-\Delta)^N \int_{\R^n} \! |x-x'|^{2s-n} f(x')\,dx' & = - 4(s-\tfrac n2)(s-1) (-\Delta)^{N-1}\! \int_{\R^n} \! |x-x'|^{2s-n-2} f(x')\,dx' \\
		& = - 4(s-\tfrac n2)(s-1) \frac{2^{2(s-1)} \pi^\frac n2 \Gamma(s-1)}{\Gamma(\frac n2-s+1)}\, ((-\Delta)^{-\sigma}f)(x) \\
		& = \frac{2^{2s} \pi^\frac n2 \Gamma(s)}{\Gamma(\frac n2-s)}\, ((-\Delta)^{-\sigma}f)(x) \,.
	\end{align*}
	The proof for $n=1$ and $\frac12<\sigma<1$ is similar. This proves the claimed formula.
	
	\medskip
	
	\emph{Step 2.} It remains to prove \eqref{eq:confinvstereo} and \eqref{eq:confinvstereo1}. Let $u\in C^\infty(\Sph^n)$. Then $A_{2s} u\in C^\infty(\Sph^n)$ (indeed, for any $\sigma>0$, $u\in H^\sigma(\Sph^n)$, so $A_{2s} u \in H^{\sigma-2s}(\Sph^n)$) and, using the explicit integral kernel of $A_{2s}^{-1}$ from the proof of Lemma \ref{confinv},
	$$
	u(\omega) = \frac{\Gamma(\frac n2-s)}{2^{2s} \pi^\frac n2 \Gamma(s)} \int_{\Sph^n} |\omega - \omega'|^{2s-n} (A_{2s} u)(\omega')\,d\omega' \,.
	$$
	Thus, using
	$$
	\frac{1+|x|^2}2 \left|\mathcal S(x) - \mathcal S(x') \right|^2 \frac{1+|x'|^2}2 = \left| x- x'\right|^2
	$$
	and changing variables, we obtain
	$$
	u_\mathcal S(x) = \frac{\Gamma(\frac n2-s)}{2^{2s} \pi^\frac n2 \Gamma(s)} \int_{\R^n} |x-x'|^{2s-n} (A_{2s} u)^\mathcal S(x')\,dx' \,.
	$$
	Note that $|(A_{2s} u)^\mathcal S(x)| \leq \|A_{2s} u\|_\infty  (2/(1+|x|^2))^{(2s+n)/2}$, so, in particular, the integral on the right side converges absolutely. Now the result from Step 1 is applicable and we obtain \eqref{eq:confinvstereo} and \eqref{eq:confinvstereo1}. This concludes the proof of the proposition.
\end{proof}


\subsection{Positivity under a vanishing condition}

A crucial role in our proof of existence of a minimizer is played by the following 

\begin{proposition}\label{positivity}
	Let $n\geq 1$ and $s\in (\frac n2,\frac{n+4}2)$. For all $u\in H^s(\Sph^n)$ with $u(S)=0$ and, if $s>\frac{n+2}{2}$, $\nabla u(S)=0$, one has
	$$
	a_{2s}[u]\geq 0 \,.
	$$
	Moreover, if $s\in (\frac n2,\frac{n+2}2)$, then equality holds if and only if for some $c\in\R$,
	$$
	u(\omega) = c\, (1 + \omega_{n+1})^\frac{2s-n}{2}
	\qquad\text{for all}\ \omega\in\Sph^n \,.
	$$
	and if $s\in [\frac {n+2}2,\frac{n+4}2)$, then equality holds if and only if for some $c\in\R$, $b\in\R^n$,
	$$
	u(\omega) = c\, (1 + \omega_{n+1})^\frac{2s-n}{2} + (1 + \omega_{n+1})^\frac{2s-n-2}{2} b\cdot\omega'
	\qquad\text{for all}\ \omega=(\omega',\omega_{n+1})\in\Sph^n \,.
	$$
\end{proposition}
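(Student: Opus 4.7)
The strategy is to pull back to $\R^n$ via the stereographic projection of Subsection 2.2 so that $a_{2s}[u]$ becomes $\|(-\Delta)^{s/2} u_\mathcal S\|_{L^2(\R^n)}^2$, for which non-negativity is manifest. Since the south pole $S$ corresponds to the point at infinity in $\R^n$, the vanishing hypotheses on $u$ at $S$ translate to decay conditions on $u_\mathcal S$ at infinity, and the two ranges of $s$ in the statement are exactly those for which the decay is sufficient to place $u_\mathcal S$ in $\dot H^s(\R^n)$.

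First I would reduce to $u \in C^\infty(\Sph^n)$ by density, using the continuity of pointwise evaluation at $S$ on $H^s$ for $s > n/2$ (and of $\nabla u(S)$ for $s > (n+2)/2$). A Taylor expansion of $u$ at $S$ together with $1 + \omega_{n+1} = 2/(1+|x|^2)$ and $\omega' = 2x/(1+|x|^2)$ yields
\[
u_\mathcal S(x) = 2^{-\frac{2s-n}{2}} |x|^{2s-n} \bigl(u(S) + 2\,\nabla u(S) \cdot x/|x|^2 + O(|x|^{-2})\bigr) \qquad\text{as } |x|\to\infty,
\]
so that $u(S)=0$ gives $u_\mathcal S(x)=O(|x|^{2s-n-1})$ and additionally $\nabla u(S)=0$ gives $u_\mathcal S(x)=O(|x|^{2s-n-2})$. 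These rates are $< s - n/2$ precisely when $s < (n+2)/2$, respectively $s < (n+4)/2$, which, via a standard Fourier-side estimate on the singularity of $\hat{u_\mathcal S}$ at the origin, places $u_\mathcal S$ in $\dot H^s(\R^n)$. Combining Lemma \ref{stereoinv} with the conformal invariance of the $L^2$ pairing used in the proof of Lemma \ref{confinv}, I would then obtain
\[
a_{2s}[u] = \int_{\R^n} u_\mathcal S \,(A_{2s} u)^\mathcal S \, dx = \int_{\R^n} u_\mathcal S \,(-\Delta)^s u_\mathcal S \, dx = \|(-\Delta)^{s/2} u_\mathcal S\|_{L^2(\R^n)}^2 \geq 0,
\]
the middle identity being Lemma \ref{stereoinv} up to an extra Hilbert transform $H$ in the case $n=1$, $\sigma > 1/2$, which is absorbed using the skew-adjointness of $H$.

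For the equality case, $\|(-\Delta)^{s/2} u_\mathcal S\|_2 = 0$ forces $|\xi|^s\,\hat{u_\mathcal S}(\xi)=0$ as a distribution, so $\hat{u_\mathcal S}$ is supported at the origin and $u_\mathcal S$ is a polynomial. The asymptotic bound above constrains its degree to be at most $0$ when $s \in (n/2,(n+2)/2)$ and at most $1$ when $s \in [(n+2)/2,(n+4)/2)$. Inverting via $u(\omega) = ((1+|x|^2)/2)^{-(2s-n)/2} u_\mathcal S(x)$, the constant case $u_\mathcal S \equiv c$ produces $u = c(1+\omega_{n+1})^{(2s-n)/2}$, while the linear case $u_\mathcal S(x) = b\cdot x$ produces $u = (1+\omega_{n+1})^{(2s-n-2)/2}\,b \cdot \omega'$, exactly matching the formulas in the statement.

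The main obstacle is justifying $u_\mathcal S \in \dot H^s(\R^n)$ rigorously, since $u_\mathcal S$ itself is only of polynomial growth and typically not in $L^2$. One route works in $\dot H^s$ as a quotient of tempered distributions modulo polynomials of appropriate degree; a more hands-on route approximates by $u_\epsilon = \chi_\epsilon u$, where $\chi_\epsilon \in C^\infty(\Sph^n)$ vanishes in a shrinking neighborhood of $S$. Then $(u_\epsilon)_\mathcal S$ has compact support in $\R^n$, the identity $a_{2s}[u_\epsilon] = \|(-\Delta)^{s/2}(u_\epsilon)_\mathcal S\|_2^2 \geq 0$ is immediate, and the delicate point becomes the $H^s(\Sph^n)$-convergence $u_\epsilon \to u$. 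This in turn reduces to a scaling estimate of the form $\|\chi_\epsilon u - u\|_{H^s}^2 \lesssim \epsilon^{2k + n - 2s}$, where $k$ is the order of vanishing of $u$ at $S$; the exponent is positive precisely in the two ranges of the proposition, with $k=1$ for the first range and $k=2$ for the second.
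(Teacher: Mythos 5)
Your overall strategy is the paper's: transfer to $\R^n$ by stereographic projection, use Lemma \ref{stereoinv} to identify $a_{2s}[u]$ with $\int|\xi|^{2s}|\widehat{u_\mathcal S}|^2d\xi$ for functions supported away from $S$, extend by density of $C^\infty_c(\Sph^n\setminus\{S\})$ in the constrained subspace, and in the equality case conclude that $\widehat{u_\mathcal S}$ is supported at the origin, so $u_\mathcal S$ is a polynomial whose degree is pinned down by the growth bound at infinity. Two points, however, deserve attention.

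First, a genuine gap: the value $s=\frac{n+2}{2}$ belongs to the statement (it is included in the second equality alternative, and there the hypothesis is only $u(S)=0$, not $\nabla u(S)=0$), but your entire argument is unavailable there. Lemma \ref{stereoinv} explicitly excludes $s\in\frac n2+\N_0$, and for $s=\frac{n+2}{2}$ the operator $A_{2s}$ is not invertible ($\alpha_{2s,n}(0)=\alpha_{2s,n}(1)=0$), so the Riesz-kernel representation underlying the whole transference breaks down. The paper disposes of this case separately and trivially: $\alpha_{2s,n}(\ell)\geq 0$ for all $\ell$ with equality exactly for $\ell\leq 1$, which gives both the inequality and the equality characterization directly from the spherical harmonics expansion. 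You need this one-line case distinction. Second, a quantitative soft spot in your ``hands-on'' cutoff route: the claimed bound $\|\chi_\epsilon u-u\|_{H^s}^2\lesssim\epsilon^{2k+n-2s}$ with integer $k=1$ (resp.\ $k=2$) presupposes $|u(\omega)|\lesssim|\omega-S|^k$ near $S$, whereas Morrey only gives the vanishing rate $s-\frac n2$, which makes the naive scaling computation exactly borderline ($\epsilon^0$). The density of $C^\infty_c(\Sph^n\setminus\{S\})$ in $\mathcal Q$ is true (it is the standard capacity-type statement that the closure of $C^\infty_c(\R^n\setminus\{0\})$ in $H^s$ is cut out by the vanishing of derivatives of order $<s-\frac n2$), and the paper simply invokes it as well known; but your proposed proof of it does not close as written. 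Your smooth-case Taylor asymptotics and the degree count in the equality case are fine, as is the treatment of the Hilbert transform for $n=1$.
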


Here $S=(0,\ldots,0,-1)$ denotes the south pole.

\begin{proof}
	If $s=\frac{n+2}{2}$, then $\alpha_{2s,n}(\ell)\geq 0$ for all $\ell\in\N_0$ with equality if and only if $\ell\leq 1$. This proves immediately the claimed inequality as well as the characterization of the cases of equality. Thus, in the following we assume that $s\neq \frac{n+2}{2}$.
	
	First, let $u\in C^\infty_c(\Sph^n\setminus\{S\})$, so that $u_\mathcal S \in C^\infty_c(\R^n)$. If $n\geq 2$, or if $n=1$ and $s\in[1,\frac32)\cup[2,\frac52)$, we multiply \eqref{eq:confinvstereo} by $(-\Delta)^\sigma u_\mathcal S$ and integrate to obtain
	\begin{align}
		\label{eq:nonnegidentity}
		\int_{\R^n} |\xi|^{2s} |\widehat{u_\mathcal S}|^2\,d\xi & = \int_{\R^n} ((-\Delta)^\sigma u_\mathcal S)((-\Delta)^N u_\mathcal S)\,dx = \int_{\R^n} u_\mathcal S (A_{2s}u)^\mathcal S\,dx = \int_{\Sph^n} u A_{2s} u \,d\omega \notag \\
		& = a_{2s}[u] \,,
	\end{align}
	where the Fourier transform is defined by
	$$
	\widehat f(\xi) = (2\pi)^{-\frac n2} \int_{\R^n} e^{-i\xi\cdot x} f(x)\,dx \,.
	$$
	For $n=1$ and $s\in(\frac12,1)\cup(\frac32,2)$ we multiply \eqref{eq:confinvstereo1} by $(-\frac{d^2}{dx^2})^{\sigma-\frac12} \overline{H u_\mathcal S}$ and obtain the same identity \eqref{eq:nonnegidentity}. Since the left side in \eqref{eq:nonnegidentity} is nonnegative, we obtain the inequality in the first part of the proposition for $u\in C^\infty_c(\Sph^n\setminus\{S\})$. 
	
	We abbreviate 
	$$
	\mathcal Q:= 
	\begin{cases} 
	\{ u\in H^s(\Sph^n):\ u(S)=0\} & \text{if}\ s\in (\frac n2,\frac{n+2}2) \,,\\
	\{ u\in H^s(\Sph^n):\ u(S)=0,\, \nabla u(S) = 0\} & \text{if}\ s\in (\frac {n+2}2,\frac{n+4}2) \,.
	\end{cases}
	$$
	Our goal is to extend the identity \eqref{eq:nonnegidentity} to $u\in\mathcal Q$ and use this extension to characterize the cases of equality. It is well-known that the set $C_c^\infty(\Sph^n\setminus\{S\})$ is dense in $\mathcal Q$ with respect to the norm in $H^s(\Sph^n)$. Moreover, $a_{2s}$ is continuous with respect to the norm in $H^s(\Sph^n)$. This immediately implies that $a_{2s}[u]\geq 0$ for all $u\in\mathcal Q$.
	
	Let $u\in\mathcal Q$ and let $(u_j)\subset C^\infty_c(\Sph^n\setminus\{0\})$ be a sequence that converges to $u\in\mathcal Q$ in $H^s(\Sph^n)$. In particular, $(u_j)$ converges to $u$ in $L^2(\Sph^n)$ and, by a change of variables, $((u_j)_\mathcal S)$ converges to $u_\mathcal S$ in $L^2(\R^n,(2/(1+|x|^2))^{2s}\,dx)$. In particular, $((u_j)_\mathcal S)$ converges to $u_\mathcal S$ in the sense of tempered distributions, and therefore  $(\widehat{(u_j)_\mathcal S})$ converges to $\widehat{u_\mathcal S}$ in the sense of tempered distributions. On the other hand, the fact that $(u_j)$ is a Cauchy sequence in $H^s(\Sph^n)$, the identity \eqref{eq:nonnegidentity} and the $H^s$-continuity of $a_{2s}$ imply that $(\widehat{(u_j)_\mathcal S})$ is a Cauchy sequence in $L^2(\R^n,|\xi|^{2s}\,d\xi)$ and therefore convergent. A standard argument (namely, interlacing two Cauchy sequences) shows that the limit is independent of the approximating sequence. We deduce from this that the restriction of the distribution $\widehat{u_\mathcal S}$ to $\R^n\setminus\{0\}$ coincides with a function and that this function belongs to $L^2(\R^n,|\xi|^{2s}\,d\xi)$. Moreover, identity \eqref{eq:nonnegidentity} remains valid for $u\in\mathcal Q$, provided the integral on the left side is restricted to $\R^n\setminus\{0\}$ and $\widehat{u_\mathcal S}$ on the left side is interpreted as the restriction of the corresponding distribution to this set.
	
	In particular, if $a_{2s}[u]=0$ for some $u\in\mathcal Q$, then the distribution $\widehat{u_\mathcal S}$ vanishes on $\R^n\setminus\{0\}$ and therefore, by a well-known theorem about distributions, $\widehat{u_\mathcal S}$ coincides with a finite sum of derivatives of a Dirac delta distribution at the origin. Thus, $u_\mathcal S$ is a polynomial.
	
	By Morrey's inequality we have $u\in C^{s-\frac n2}(\Sph^n)$ if $s\in (\frac n2,\frac{n+2}2)$ and $u\in C^{1,s-\frac{n+2}2}(\Sph^n)$ if $s\in (\frac {n+2}2,\frac{n+4}2)$ and therefore in either case, the vanishing conditions imply that
	$$
	| u(\omega) | \lesssim |\omega - S|^{s-\frac n2}
	\qquad\text{for all}\ \omega\in\Sph^n \,,
	$$
	that is,
	\begin{equation}
		\label{eq:growthbound}
		|u_\mathcal S(x)| \lesssim \left( \frac{1+|x|^2}{2} \right)^{s-\frac n2} \left| \mathcal S(x)- S \right|^{s-\frac n2} = \left( 1+|x|^2 \right)^{\frac12(s-\frac n2)} 
		\qquad\text{for all}\ x\in\R^n \,.
	\end{equation}
	If $s<\frac{n+2}2$, then the right side grows sublinearly and therefore $u_\mathcal S$, being a polynomial, is equal to a constant $c$. Now $u_\mathcal S(x)=c$ is equivalent to $u(\omega) = c (1 + \omega_{n+1})^{(2s-n)/2}$, as claimed. If $s\in (\frac {n+2}2,\frac{n+4}2)$, then the right side in \eqref{eq:growthbound} grows subquadratically and therefore $u_\mathcal S$ is affine linear, that is, $u_\mathcal S(x) = c + b\cdot x$. This is equivalent to the form given in the proposition.
\end{proof}


\section{Proof of Theorem \ref{main1}}

In this section, we prove our first main theorem, whose nontrivial part says that for $s\in (\frac n2,\frac{n+4}2)\setminus \{\frac{n+2}2\}$ the infimum $I_{2s,n}$ in \eqref{eq:infimum} is achieved precisely by the constant function and its images under the group of conformal transformations. Similarly as in \cite{Ha} we proceed in two steps, namely first showing that the infimum is achieved and then characterizing the functions where the infimum is achieved.

\subsection{Existence of a minimizer}

\begin{proposition}\label{existence}
	Let $n\geq 1$ and $s\in (\frac n2,\frac{n+4}2)\setminus \{\frac{n+2}2\}$. Let $(u_j)\subset H^s(\Sph^n)$ be a sequence of nonnegative functions with $u_j^{-\frac{2n}{2s-n}} \in L^1(\Sph^n)$ and
	\begin{equation*}
		\lim_{j\to 0} a_{2s}[u_j] \left( \int_{\Sph^n} u_j^{-\frac{2n}{2s-n}}\,d\omega \right)^{\frac{2s-n}{n}}
		= I_{2s,n} \,.
	\end{equation*}
	Then there is a sequence $(\Phi_j)$ of conformal transformations of $\Sph^n$ and a sequence $(c_j)\subset\R_+$ such that, after passing to a subsequence, the functions $c_j (u_j)_{\Phi_j}$ converge in $H^s(\Sph^n)$ to an everywhere positive function that minimizes $I_{2s,n}$.
\end{proposition}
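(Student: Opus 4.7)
Following the strategy of Hang \cite{Ha}, my plan is to use the scaling invariance $u\mapsto cu$ and the conformal invariance $u\mapsto u_\Phi$ of the ratio (Lemma \ref{confinv}) to bring the minimizing sequence into a canonical form amenable to concentration-compactness, then pass to the limit. Write $\alpha:=\tfrac{2n}{2s-n}$. First, pick $c_j>0$ so that $\int u_j^{-\alpha}\,d\omega=|\Sph^n|$, so that $a_{2s}[u_j]\to I_{2s,n}|\Sph^n|^{-(2s-n)/n}$. Then choose $\Phi_j$ so that the probability measure $\mu_j:=|\Sph^n|^{-1}((u_j)_{\Phi_j})^{-\alpha}\,d\omega$ is centered, $\int\omega\,d\mu_j(\omega)=0\in\R^{n+1}$; this is possible because the conformal group of $\Sph^n$ acts transitively on probability measures via the center-of-mass map, the orbit being the open unit ball of $\R^{n+1}$. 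Set $v_j:=c_j(u_j)_{\Phi_j}$.

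The first main step is $H^s$-boundedness of $(v_j)$. The quadratic form $a_{2s}$ has only finitely many negative eigenvalues (on $P_0$ when $s\in(\tfrac n2,\tfrac{n+2}2)$, on $P_1$ when $s\in(\tfrac{n+2}2,\tfrac{n+4}2)$), so its positive part controls the $H^s$-norm modulo a finite-dimensional subspace. Assume for contradiction $\|v_j\|_{H^s}\to\infty$ and set $w_j:=v_j/\|v_j\|_{H^s}$; then $a_{2s}[w_j]\to 0$ while $\int w_j^{-\alpha}\,d\omega=\|v_j\|_{H^s}^\alpha|\Sph^n|\to\infty$. By Rellich, a subsequence has a $C^0$-limit $w_\infty$, which must therefore vanish somewhere. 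After rotating the zero to the south pole, Proposition \ref{positivity} yields $a_{2s}[w_\infty]\geq 0$, while weak lower semicontinuity of $a_{2s}$ gives the reverse inequality; the rigidity part of Proposition \ref{positivity} then forces $w_\infty^{-\alpha}$ to concentrate its mass at the south pole, violating the centering $\int\omega\,d\mu_j=0$. (The purely small-scale case $w_\infty\equiv 0$ requires a further use of the dilation subgroup of the conformal group to reduce to the previous case.) Hence $(v_j)$ is bounded in $H^s$, and along a subsequence $v_j\rightharpoonup v_\infty$ weakly in $H^s$ and uniformly on $\Sph^n$ via the compact embedding $H^s\hookrightarrow C^{s-n/2}$; in particular $v_\infty\geq 0$.

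If $v_\infty(\omega_0)=0$ for some $\omega_0\in\Sph^n$, then (using also $\nabla v_\infty(\omega_0)=0$ when $s>\tfrac{n+2}2$) Hölder regularity gives $v_\infty(\omega)\leq C|\omega-\omega_0|^{s-n/2}$ near $\omega_0$. The critical identity $(s-\tfrac n2)\alpha=n$ then forces
\[
\int_{B_\epsilon(\omega_0)} v_\infty^{-\alpha}\,d\omega \;\geq\; C\int_0^\epsilon r^{-1}\,dr \;=\; +\infty,
\]
contradicting Fatou, which gives $\int v_\infty^{-\alpha}\,d\omega\leq\liminf\int v_j^{-\alpha}\,d\omega=|\Sph^n|$. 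Hence $v_\infty>0$ everywhere. Uniform convergence then yields $\int v_j^{-\alpha}\,d\omega\to\int v_\infty^{-\alpha}\,d\omega=|\Sph^n|$, and combined with weak l.s.c.\ of $a_{2s}$ one finds $a_{2s}[v_\infty]\,|\Sph^n|^{(2s-n)/n}\leq I_{2s,n}$; equality must hold by the definition of the infimum, so $v_\infty$ minimizes $I_{2s,n}$. Strong $H^s$-convergence then follows from equality in the weak l.s.c.\ of the positive part of $a_{2s}$ together with $L^2$-convergence of the finite-dimensional negative part. The principal obstacle is the $H^s$-boundedness step, which crucially uses Proposition \ref{positivity} to rule out loss of compactness along the noncompact conformal orbit.
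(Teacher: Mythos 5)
Your route is genuinely different from the paper's and is, in outline, viable. The paper normalizes by $\max u_j=1$ with the minimum rotated to the south pole, proves $H^s$-boundedness directly from the energy bound and the coercivity estimate $a_{2s}[v]\geq c\|v\|_{H^s}^2-C'\|v\|_{L^2}^2$, and then handles the degenerate limit $u=2^{-\frac{2s-n}{2}}(1+\omega_{n+1})^{\frac{2s-n}{2}}$ by an \emph{explicit} dilation $\Phi_j=\mathcal S\mathcal D_{\lambda_j}\mathcal S^{-1}$ with $\lambda_j=(u_j(N)/u_j(S))^{1/(2s-n)}$, deriving a pointwise lower bound for the renormalized sequence. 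You instead fix the conformal gauge up front by the Hersch-type centering of the probability measures $\mu_j$, obtain boundedness by contradiction (blow-down $w_j=v_j/\|v_j\|_{H^s}$, rigidity in Proposition \ref{positivity}, concentration of $\mu_j$ at the single zero of $w_\infty$ versus $\int\omega\,d\mu_j=0$), and then get strict positivity of the limit from H\"older regularity at a zero plus the critical identity $(s-\tfrac n2)\cdot\tfrac{2n}{2s-n}=n$ and Fatou --- the latter being exactly the paper's Lemma \ref{posmin} argument, which works here because your normalization fixes $\int v_j^{-\frac{2n}{2s-n}}d\omega$. What your approach buys is a single, uniform compactness mechanism; what it costs is the centering lemma (which you should state correctly: the conformal group does not act transitively on probability measures --- the relevant fact is that every probability measure that is not a Dirac mass, in particular every absolutely continuous one, can be conformally centered) and careful bookkeeping in the blow-down. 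Minor omissions that are easily repaired: for $s\in(\tfrac{n+2}2,\tfrac{n+4}2)$ you must note that $\nabla w_\infty=0$ at the zero (it is a minimum of a nonnegative $C^1$ function) before invoking Proposition \ref{positivity}, and that nonnegativity forces $b=0$ in the equality case.

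The one genuine gap is the case $w_\infty\equiv 0$ in the boundedness step. Your parenthetical fix --- ``a further use of the dilation subgroup'' --- does not work at that point: the conformal gauge has already been spent on the centering, and composing with further dilations would destroy the condition $\int\omega\,d\mu_j=0$ on which your contradiction rests. Fortunately the case cannot occur, but for a reason you did not give: by the coercivity estimate \eqref{eq:energyequivalence},
\begin{equation*}
a_{2s}[w_j]\;\geq\;c\,\|w_j\|_{H^s(\Sph^n)}^2-C'\|w_j\|_{L^2(\Sph^n)}^2\;=\;c-C'\|w_j\|_{L^2(\Sph^n)}^2\,,
\end{equation*}
so $\limsup_j a_{2s}[w_j]\leq 0$ together with $\|w_j\|_{L^2}\to\|w_\infty\|_{L^2}$ forces $\|w_\infty\|_{L^2}^2\geq c/C'>0$. (Note also that if $I_{2s,n}$ is not yet known to be finite you only get $\limsup_j a_{2s}[w_j]\leq 0$ rather than $a_{2s}[w_j]\to 0$; that suffices for both this estimate and the lower-semicontinuity step.) With this insertion, and with the centering lemma properly quoted, your argument goes through.
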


\begin{proof}
	\emph{Step 1.}
	After multiplying $u_j$ by a positive constant and after a rotation (which can be implemented as a conformal transformation), we may assume that for all $j$,
	$$
	\max_{\Sph^n} u_j =1
	\qquad\text{and}\qquad
	u_j(S)=\min_{\Sph^n} u_j \,.
	$$
	Here $S=(0,\ldots,0,-1)$ denotes the south pole and later $N=(0,\ldots,0,1)$ will denote the north pole

Let us show that $(u_j)$ is bounded in $H^s(\Sph^n)$. By the minimizing property, there is a $C\geq 0$ such that for all $j$,
\begin{equation}
	\label{eq:energybdd}
	a_{2s}[u_j] \left( \int_{\Sph^n} u_j^{-\frac{2n}{2s-n}}\,d\omega \right)^{\frac{2s-n}{n}} \leq C \,.
\end{equation}
Thus, by our normalization,
$$
a_{2s}[u_j] \leq C \left( \int_{\Sph^n} u_j^{-\frac{2n}{2s-n}}\,d\omega \right)^{-\frac{2s-n}{n}} \leq C |\Sph^n|^{-\frac{2s-n}{n}} \,.
$$
On the other hand, by Stirling's formula, $\alpha_{2s,n}(\ell)$ grows like $\ell^{2s}$. Since $\alpha_{2s,n}(\ell)\geq 0$ for all $\ell\geq 1$ if $s\in (\frac n2,\frac{n+2}2)$ and for all $\ell\geq 2$ if $s\in (\frac {n+2}2,\frac{n+4}2)$ and since the remaining finite rank terms are bounded in $L^2(\Sph^n)$, we see that for all $v\in H^s(\Sph^n)$,
\begin{equation}
	\label{eq:energyequivalence}
	a_{2s}[v] \geq c \| v \|_{H^s(\Sph^n)}^2 - C' \| v \|_{L^2(\Sph^n)}^2
\end{equation}
with $c>0$ and $C'<\infty$. Combining these inequalities we obtain
$$
c \| u_j \|_{H^s(\Sph^n)}^2 \leq C' \| u_j \|_{L^2(\Sph^n)}^2 + C |\Sph^n|^{-\frac{2s-n}{n}}
\leq C' |\Sph^n| + C |\Sph^n|^{-\frac{2s-n}{n}} \,,
$$
which proves the claimed boundedness.

Thus, after passing to a subsequence, we may assume that $(u_j)$ converges weakly in $H^s(\Sph^n)$ to some $u$. By Morrey's inequality and the Arzel\`a--Ascoli lemma, $(u_j)$ converges strongly to $u$ in $C(\Sph^n)$. As a consequence, $u\geq 0$ and
$$
\max_{\Sph^n} u = 1
\qquad\text{and}\qquad
u(S) = \min_{\Sph^n} u \,.
$$
We note that $a_{2s}$ is lower semicontinuous with respect to weak convergence in $H^s(\Sph^n)$. Indeed, this is clear for the positive part of the functional $a_{2s}$ and its negative part is finite rank and therefore continuous. As a consequence of lower semicontinuity,
$$
\liminf_{j\to\infty} a_{2s}[u_j] \geq a_{2s}[u] \,.
$$

\medskip

\emph{Step 2.}
If we have $u>0$ on $\Sph^n$, then $u_j^{-1}\to u^{-1}$ uniformly on $\Sph^n$ and consequently
$$
\int_{\Sph^n} u_j^{-\frac{2n}{2s-n}}\,d\omega \to \int_{\Sph^n} u^{-\frac{2n}{2s-n}}\,d\omega \,.
$$
This, together with the lower semicontinuity of $a_{2s}$ implies that $u$ is a minimizer. Moreover, setting $r_j:= u_j - u$ and using weak convergence in $H^s(\Sph^n)$, we find
$$
a_{2s}[u_j] = a_{2s}[u] + a_{2s}[r_j] + o(1)
$$
and therefore
\begin{align*}
	a_{2s}[u_j] \left( \int_{\Sph^n} u_j^{-\frac{2n}{2s-n}}d\omega \right)^{\frac{2s-n}{n}} & \!\!= a_{2s}[u] \left( \int_{\Sph^n} u^{-\frac{2n}{2s-n}}d\omega \right)^{\frac{2s-n}{n}} \!\!+ a_{2s}[r_j]\left( \int_{\Sph^n} u^{-\frac{2n}{2s-n}}d\omega \right)^{\frac{2s-n}{n}} \\
	& \quad + o(1) \,.
\end{align*}
Since the left side converges to $I_{2s,n}$ and the first term on the right side is equal to $I_{2s,n}$, we find that $a_{2s}[r_j]\to 0$. By \eqref{eq:energyequivalence} and the strong convergence of $r_j$ in $L^2(\Sph^n)$, we infer that $r_j\to 0$ strongly in $H^s(\Sph^n)$, that is, $u_j\to u$ in $H^s(\Sph^n)$, as claimed.

\medskip

\emph{Step 3.}
Thus, in what follows we assume that $\min u =0$. Our goal will be to show that after a conformal transformation and multiplication by a constant we can make the $u_j$ converge to a positive limit, which will be a minimizer.

We observe that
\begin{equation}
	\label{eq:negenergy}
	a_{2s}[u]\leq 0 \,.
\end{equation}
In fact, for $s\in(\frac n2,\frac{n+2}2)$ the infimum $I_{2s,n}$ is negative, as can be seen by taking a constant trial function, and therefore $a_{2s}[u_j]$ is negative for all sufficiently large $j$. Thus, \eqref{eq:negenergy} follows by lower semicontinuity. On the other hand, for $s\in(\frac {n+2}2,\frac{n+4}2)$ we have by Morrey's inequality $u\in C^{1,s-\frac{n+2}2}(\Sph^n)$. Since $u(S)=\min u = 0$ and $u\geq 0$ we have $\nabla u(S) =0$ and consequently,
$$
u(\omega) \leq C_u |\omega - S|^{s-\frac n2} 
\qquad\text{for all}\ \omega\in\Sph^n \,.
$$
Thus, by Fatou's lemma,
$$
\liminf_{j\to\infty} \int_{\Sph^n} u_j^{-\frac{2n}{2s-n}}\,d\omega \geq \int_{\Sph^n} u^{-\frac{2n}{2s-n}}\,d\omega  \geq C_u^{-\frac{2n}{2s-n}} \int_{\Sph^n} |\omega - S|^{- n}\,d\omega = \infty \,,
$$
that is, $\int_{\Sph^n} u_j^{-\frac{2n}{2s-n}}\,d\omega\to\infty$. Inserting this into \eqref{eq:energybdd} we obtain $\limsup_{j\to\infty} a_{2s}[u_j]\leq 0$ and then \eqref{eq:negenergy} follows again by lower semicontinuity.

On the other hand, by the first part of Proposition \ref{positivity}, the fact that $u(S)=0$ (and $\nabla u(S)=0$ if $s>\frac{n+2}{2}$) implies that $a_{2s}[u]\geq 0$ and therefore, in view of \eqref{eq:negenergy}, the second part of Proposition \ref{positivity} implies that
$$
u(\omega) = 2^{-\frac{2s-n}{2}} ( 1+\omega_{n+1})^\frac{2s-n}{2} 
\qquad\text{for all}\ \omega\in\Sph^n \,.
$$
(Here we used the normalization $\max u =1$ to determine the constant and, in case $s>\frac{n+2}{2}$ we use positivity of $u$ to deduce that $b=0$.)

With a sequence $(\lambda_j)\subset (0,\infty)$ to be determined later, we now consider the conformal transformation $\Phi_j$ of $\Sph^n$ given by $\Phi_j := \mathcal S \mathcal D_{\lambda_j} \mathcal S^{-1}$, where $\mathcal D_{\lambda_j}$ is dilation on $\R^n$ by $\lambda_j$, that is $(\mathcal D_{\lambda_j})(x) = \lambda_j x$, and $\mathcal S$ is the (inverse) stereographic projection. We set
$$
v_j(\omega) := J_{\Phi_j}(\omega)^{-\frac{2s-n}{2n}} u_j(\Phi_j(\omega))
$$
and
$$
\tilde u_j := \frac{v_j}{\max v_j} \,.
$$
By conformal invariance (Lemma \ref{confinv}) and homogeneity, $(\tilde u_j)$ is a minimizing sequence for $I_{2s,n}$ and it is normalized by $\max \tilde u_j = 1$. We argue as before and, after passing to a subsequence, we may assume that $(\tilde u_j)$ converges weakly in $H^s(\Sph^n)$ and strongly in $C(\Sph^n)$ to some $\tilde u$. 

Note that $\tilde u$ depends on the choice of the sequence $(\lambda_j)$. We claim that, for an appropriate choice of $(\lambda_j)$ (where for each $j$, $\lambda_j$ only depends on $u_j$), we have $\tilde u>0$. Once this is shown, we obtain in the same way as before that $\tilde u$ is a minimizer and that the convergence is strong in $H^s(\Sph^n)$, so we are done.

We argue by contradiction and assume that there is a $\xi\in\Sph^n$ such that $\tilde u(\xi)=0$. Then, still arguing as before, but with a rotated version of Proposition \ref{positivity},
\begin{equation}
	\label{eq:exproof2}
	\tilde u(\omega) = 2^{-\frac{2s-n}{2}} \left( 1 - \xi\cdot\omega \right)^\frac{2s-n}{2}
	\qquad\text{for all}\ \omega\in\Sph^n \,.
\end{equation}

We now compute explicitly
$$
J_{\Phi_j}(\omega) = \left( \frac{2\lambda_j}{1+\omega_{n+1} +\lambda_j^2 (1-\omega_{n+1})} \right)^n
$$
and, using $\Phi_j(N)=N$ and $\Phi_j(S)=S$, we obtain
$$
v_j(N)= \lambda_j^{-\frac{2s-n}{2}} u_j(N)
\qquad\text{and}\qquad
v_j(S) = \lambda_j^{\frac{2s-n}{2}} u_j(S) \,.
$$
Thus, if we choose
$$
\lambda_j := \left( \frac{u_j(N)}{u_j(S)} \right)^{\frac{1}{2s-n}} \,,
$$
then $\tilde u_j(S) = \tilde u_j(N)$ and, in the limit, $\tilde u(S)=\tilde u(N)$. By \eqref{eq:exproof2}, this implies that $\xi_{n+1}=0$ and $\tilde u(N)=2^{-\frac{2s-n}{2}}>0$.

Since $\min u_j = u_j(S)$, we have for any $\omega\in\Sph^n$,
\begin{align*}
	\tilde u_j(\omega) & \geq \frac{u_j(S)}{\max v_j} \left( \frac{1+\omega_{n+1} +\lambda_j^2 (1-\omega_{n+1})}{2\lambda_j} \right)^{\frac{2s-n}{2}} \\
	& = \tilde u_j(N) \left( \frac{\lambda_j^{-2} (1+\omega_{n+1}) + 1-\omega_{n+1}}{2} \right)^{\frac{2s-n}{2}}.
\end{align*}
Since $\tilde u_j(N)\to \tilde u(N) = 2^{-\frac{2s-n}{2}}$ and since $\lambda_j\to +\infty$ (because $u_j(S)\to u(S) = 0$ and $u_j(N)\to u(N)=1$), we infer that in the limit
$$
\tilde u(\omega) \geq  2^{2s-n} \left( 1- \omega_{n+1} \right)^{\frac{2s-n}{2}}.
$$
In particular, $\tilde u(\xi) \geq 2^{2s-n}>0$, a contradiction. This completes the proof.
\end{proof}

A variation of part of the above argument allows us to show the following.

\begin{lemma}\label{posmin}
	Let $n\geq 1$ and $s\in(\frac n2,\frac{n+4}2)\setminus\{\frac{n+2}{2}\}$. Assume that $u\in H^s(\Sph^n)$ is nonnegative with $u^{-\frac{2n}{2s-n}} \in L^1(\Sph^n)$ and
	\begin{equation*}
		a_{2s}[u] \left( \int_{\Sph^n} u^{-\frac{2n}{2s-n}}\,d\omega \right)^{\frac{2s-n}{n}}
		= I_{2s,n} \,.
	\end{equation*}
	Then $u$ is everywhere positive.
\end{lemma}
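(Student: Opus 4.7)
I would argue by contradiction using only Morrey's embedding together with the integrability hypothesis, extending to both ranges of $s$ the same Morrey/Fatou computation already carried out for $s\in(\tfrac{n+2}{2},\tfrac{n+4}{2})$ in Step~3 of Proposition~\ref{existence}. Suppose there is an $\omega_0\in\Sph^n$ with $u(\omega_0)=0$. Applying a rotation (which leaves both $a_{2s}[u]$ and $\int_{\Sph^n}u^{-2n/(2s-n)}\,d\omega$ invariant), I may assume $\omega_0=S$.

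The first step is to establish the local upper bound
\[
u(\omega)\le C\,|\omega-S|^{s-\frac{n}{2}}
\]
for $\omega$ near $S$. I would split into two cases. If $s\in(\tfrac{n}{2},\tfrac{n+2}{2})$, then since $s-\tfrac{n}{2}\in(0,1)$, Morrey's inequality yields $H^s(\Sph^n)\hookrightarrow C^{0,s-n/2}(\Sph^n)$, so the bound is immediate from $u(S)=0$. If $s\in(\tfrac{n+2}{2},\tfrac{n+4}{2})$, then $H^s(\Sph^n)\hookrightarrow C^{1,s-(n+2)/2}(\Sph^n)$; since $u\ge 0$ attains its minimum at $S$, necessarily $\nabla u(S)=0$, and a first-order Taylor expansion with H\"older-continuous gradient gives the same power (the H\"older exponent $s-\tfrac{n+2}{2}$ of the gradient adds to $1$ to produce the overall exponent $s-\tfrac{n}{2}$).

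Next, I would compute the inverse power: since $(s-\tfrac{n}{2})\cdot\tfrac{2n}{2s-n}=n$, the bound above gives
\[
u(\omega)^{-\frac{2n}{2s-n}}\ \ge\ c\,|\omega-S|^{-n}
\]
on a neighbourhood of $S$. In geodesic polar coordinates centred at $S$, the surface measure is comparable to $r^{n-1}\,dr$, so this lower bound is logarithmically non-integrable. That contradicts the hypothesis $u^{-2n/(2s-n)}\in L^1(\Sph^n)$, hence $u$ must be everywhere positive.

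There is no substantial obstacle: minimality of $u$ and Proposition~\ref{positivity} are not actually used, and the only mild point is to notice that the two different Morrey embeddings happen to produce the \emph{same} effective vanishing rate $|\omega-S|^{s-n/2}$ for $u$, which is exactly borderline for breaking integrability of $u^{-2n/(2s-n)}$.
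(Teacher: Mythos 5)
Your proof is correct, and for the subrange $s\in(\tfrac n2,\tfrac{n+2}2)$ it takes a genuinely different route from the paper. The paper argues there via Proposition~\ref{positivity}: if $\min u=0$ then $a_{2s}[u]\geq 0$, while minimality forces $a_{2s}[u]<0$ because $I_{2s,n}\leq a_{2s}[1]\,|\Sph^n|^{(2s-n)/n}<0$ in that range (the sign of $\Gamma(\tfrac n2-s)$). Your observation is that the Morrey/Fatou computation, which the paper uses only for $s\in(\tfrac{n+2}2,\tfrac{n+4}2)$, in fact covers both subranges: the embedding $H^s\hookrightarrow C^{0,s-n/2}$ for $s<\tfrac{n+2}2$, respectively $H^s\hookrightarrow C^{1,s-(n+2)/2}$ together with $\nabla u=0$ at an interior minimum for $s>\tfrac{n+2}2$, yields the same vanishing rate $u(\omega)\lesssim|\omega-S|^{s-n/2}$, and the identity $(s-\tfrac n2)\cdot\tfrac{2n}{2s-n}=n$ makes $u^{-2n/(2s-n)}$ exactly (logarithmically) non-integrable. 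This buys you a unified and slightly stronger statement: \emph{any} nonnegative $u\in H^s(\Sph^n)$ with $u^{-2n/(2s-n)}\in L^1(\Sph^n)$ is everywhere positive, with no appeal to minimality, to Proposition~\ref{positivity}, or to the sign of the infimum; the paper's version of the first case, by contrast, is shorter but genuinely uses that $u$ is a minimizer. Both arguments are sound, so this is a matter of taste rather than of correctness.
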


\begin{proof}
	We argue by contradiction and assume that $\min u=0$. Then, by a rotated version of Proposition \ref{positivity}, $a_{2s}[u]\geq 0$. Since $a_{2s}[1]<0$ for $s\in(\frac n2,\frac{n+2}2)$, we immediately obtain a contradiction in that case. On the other hand, if $s\in(\frac{n+2}2,\frac{n+4}2)$, then similarly as in the previous proof $u(\omega) \lesssim |\omega - \xi|^{s-\frac n2}$ for all $\omega\in\Sph^n$ and some $\xi\in\Sph^n$ and consequently $u^{-\frac{2n}{2s-n}}\not\in L^1(\Sph^n)$, which is again a contradiction.	
\end{proof}


\subsection{Proof of Theorem \ref{main1}}

First, let $s\in\frac{n}{2}+\N$. Then $\alpha_{2s,n}(\ell)\geq 0$ for all $\ell\in\N_0$ with equality if and only if $\ell\leq s-\frac n2$. This immediately proves the inequality and the characterization of cases of equality.

In the remainder of this proof, let $s\in(\frac n2,\frac{n+4}{2})\setminus\{\frac{n+2}{2}\}$. Then, according to Proposition \ref{existence}, there is a minimizer $u$ for the infimum $I_{2s,n}$. Conversely, assume that $0\leq u\in H^s(\Sph^n)$ with $u^{-\frac{2n}{2s-n}}\in L^1(\Sph^n)$ realized equality in \eqref{eq:mainineq}. Then by Lemma \ref{posmin} $u>0$ and using this, it is easy to derive the Euler--Lagrange equation
$$
A_{2s} u = \lambda\, u^{-\frac{2s+n}{2s-n}} \,,
\qquad
\lambda = \frac{a_{2s}[u]}{\int_{\Sph^n} u^{-\frac{2n}{2s-n}}\,d\omega} \,.
$$
(The form of the Euler--Lagrange multiplier follows by integrating the equation against $u$.) The equation holds a-priori in $H^{-s}(\Sph^n)$, but since the right side is square-integrable (in fact, H\"older continuous), a standard bootstrap argument yields that $u\in C^\infty(\Sph^n)$. Applying the inverse $A_{2s}^{-1}$ to both sides as in the proof of Lemma \ref{confinv}, we find
$$
u = \lambda\, A_{2s}^{-1} u^{-\frac{2s+n}{2s-n}} = \lambda\, \frac{\Gamma(\frac n2-s)}{2^{2s} \pi^\frac n2 \Gamma(s)} \int_{\Sph^n} |\omega - \omega'|^{2s-n} u(\omega')^{-\frac{2s+n}{2s-n}} \,d\omega' \,.
$$
Taking into account the sign of $\Gamma(\frac n2-s)$, we see that $\lambda<0$ if $s\in (\frac n2,\frac{n+2}{2})$ and $\lambda>0$ if $s\in(\frac {n+2}2,\frac{n+4}{2})$. Defining $u_\mathcal S$ and $u^\mathcal S$ as in \eqref{eq:stereofcn} and arguing as in Step 2 of the proof of Lemma \ref{stereoinv}, we obtain
\begin{align*}
	u_\mathcal S(x) & = \lambda\, \frac{\Gamma(\frac n2-s)}{2^{2s} \pi^\frac n2 \Gamma(s)} \int_{\R^n} |x - x'|^{2s-n} (u^{-\frac{2s+n}{2s-n}})^\mathcal S(x')\,dx' \\
	& = \lambda\, \frac{\Gamma(\frac n2-s)}{2^{2s} \pi^\frac n2 \Gamma(s)} \int_{\R^n} |x - x'|^{2s-n} (u_\mathcal S(x'))^{-\frac{2s+n}{2s-n}}\,dx' \,.
\end{align*}
Applying \cite[Theorem 1.5]{Li04} to a suitable multiple of $u_\mathcal S$ (at this point we use the sign of $\lambda$), we find that, for some $a\in\R^n$, $b,c>0$,
$$
u_\mathcal S(x) = c \left( \frac{b^2 + |x-a|^2}{2b} \right)^{\frac{2s-n}{2}}.
$$
This means that, with $\zeta := (2\eta - b^2(1 + \eta_{n+1})e_{n+1})/(2 + b^2(1 + \eta_{n+1}))$ and $\eta := S(a)$,
$$
u(\omega) = c \left( \frac{1 - \zeta\cdot\omega}{\sqrt{1-|\zeta|^2}} \right)^\frac{2s-n}{2},
$$
which is of the form claimed in the theorem. 

Conversely, if $u$ is of the form in the theorem, then $u=c J_\Phi^{-(2s-n)/(2n)} = c 1_ \Phi$ for some conformal transformation $\Phi$ of $\Sph^n$ and some $c>0$. (This follows, for instance, by reversing the above computation, namely by showing that $u_\mathcal S$ is a multiple of a translation and dilation of $((1+|x|^2)/2)^{(2s-n)/2}$.) Then $\int u^{-2n/(2s-n)}\,d\omega = c^{-2n/(2s-n)} \int 1^{-2n/(2s-n)}\,d\omega$ and, by Lemma \ref{confinv}, $a_{2s}[u] = c^2 a_{2s}[1]$. In particular, the value of the left side of \eqref{eq:mainineq} is independent of $\zeta$ and $c$ and since, according to what we showed before, the left side is minimal for some $\zeta$ and $c$, it is in fact minimal for every $\zeta$ and $c$. This concludes the proof of Theorem \ref{main1}.


\section{Proof of Theorem \ref{main2}}

In this section we prove our second main result, which says that for $s\in(\frac{n+4}{2},+\infty)\setminus (\frac n2 + \N)$ there is no minimizer for the infimum $I_{2s,n}$ in \eqref{eq:infimum} and that, at least for $n\geq 2$, one has $I_{2s,n}=-\infty$. These two assertions are proved in the following two subsections.

\subsection{Local instability}

We prove more than what is stated in Theorem \ref{main2}, namely that the quantity in \eqref{eq:infimum} does not even have a local minimizer $0<u\in H^s(\Sph^n)$. Indeed, if such a local minimizer would exist, we could repeat the argument in the proof of Theorem \ref{main1} and would infer that this minimizer is necessarily of the form $c 1_\Phi$ for a conformal transformation $\Phi$ of $\Sph^n$ and a constant $c>0$. Since the minimization problem is homogeneous and conformally invariant (Lemma \ref{confinv}), it therefore suffices to show that the constant function $1$ is not a local minimizer. We do this by showing that the second variation is not positive semidefinite.

A simple computation shows that for every $\phi\in H^s(\Sph^n)$, as $t\to 0$,
$$
a[1+t\phi] \left( \int_{\Sph^n} (1+t\phi)^{-\frac{2n}{2s-n}}d\omega \right)^\frac{2s-n}{n}
= a[1] |\Sph^n|^{\frac{2s-n}n} + t^2 |\Sph^n|^\frac{2s-n}{n} H[\phi] + o(t^2)
$$
with
\begin{align*}
	H[\phi] & := a_{2s}[\phi]  + \frac{2s+n}{2s-n}\, \frac{a_{2s}[1]}{|\Sph^n|} \int_{\Sph^n} \phi^2\,d\omega  + \frac{4(s-n)}{2s-n}\, \frac{a_{2s}[1]}{|\Sph^n|^2} \left( \int_{\Sph^n} \phi\,d\omega \right)^2 \\
	& \quad - 4\, \frac{a_{2s}[1,\phi]}{|\Sph^n|} \int_{\Sph^n} \phi\,d\omega \,.
\end{align*}
Here $a_{2s}[\cdot,\cdot]$ is the natural bilinear form associated to $a_{2s}[\cdot]$. This can be rewritten as
\begin{align*}
	H[\phi] & = a_{2s}[\phi]  + \frac{2s+n}{2s-n}\, \alpha_{2s,n}(0) \int_{\Sph^n} \phi^2\,d\omega  - \frac{4s}{2s-n}\, \alpha_{2s,n}(0) \left( \int_{\Sph^n} \phi\,d\omega \right)^2.
\end{align*}

If $2k<s-\frac n2<2k+1$ for some $k\in\N$, we choose $\phi$ to be an $L^2$-normalized spherical harmonic of degree two and obtain
$$
H[\phi] = \alpha_{2s,n}(2) + \frac{2s+n}{2s-n} \alpha_{2s,n}(0) = \frac{\Gamma(2+\frac n2+s)}{\Gamma(2+\frac n2-s)} + \frac{2s+n}{2s-n} \, \frac{\Gamma(\frac n2+s)}{\Gamma(\frac n2-s)} = 2s \frac{\Gamma(1+\frac n2+s)}{\Gamma(2+\frac n2-s)}.
$$
Since $\Gamma(2+\frac n2-s)<0$, we have $H[\phi]<0$, which shows the instability of $1$.

If $2k+1<s-\frac n2<2(k+1)$ for some $k\in\N$, we choose $\phi$ to be an $L^2$-normalized spherical harmonic of degree three and obtain
\begin{align*}
	H[\phi] & = \alpha_{2s,n}(3) + \frac{2s+n}{2s-n} \alpha_{2s,n}(0) = \frac{\Gamma(3+\frac n2+s)}{\Gamma(3+\frac n2-s)} + \frac{2s+n}{2s-n} \, \frac{\Gamma(\frac n2+s)}{\Gamma(\frac n2-s)} \\
	& = 2s(n+3) \frac{\Gamma(1+\frac n2+s)}{\Gamma(3+\frac n2-s)}.
\end{align*}
Since $\Gamma(3+\frac n2-s)<0$, we have $H[\phi]<0$, which shows again the instability of $1$.

\subsection{Global instability}

We now complete the proof of Theorem \ref{main2} by showing that 
\begin{equation}
	\label{I = -infty}
	I_{2s,n} = -\infty \qquad \text{ if } n \geq 2 \quad \text{ and } \quad s\in (\tfrac{n+4}2,\infty)\setminus (\tfrac n2+ \N). 
\end{equation}
We will give a separate proof for the following two subcases
\begin{align}
	2K< s- \frac n2 <2K+1 & \quad \text{for some}\ K\in\N\,, \label{even} \\
	2K+1 < s - \frac n2 < 2K+2 & \quad \text{for some}\ K\in\N \,. \label{odd}
\end{align}
Note that in the first case, we have $\alpha_{2s,n}(2k)<0$ for all $k=0,\ldots,K$ and $\alpha_{2s,n}(\ell)>0$ for all other $\ell$. In the second case, we have $\alpha_{2s,n}(2k+1)<0$ for all $k=0,\ldots,K$ and $\alpha_{2s,n}(\ell)>0$ for all other $\ell$.

For both cases, we will give a test function $u \geq 0$ such that 
\begin{equation}
	\label{u test function}
	-\infty < a_{2s}[u]<0
	\qquad\text{and}\qquad
	\int_{\Sph^n} u^{-\frac{2n}{2s-n}}\,d\omega = + \infty \,.
\end{equation}
This essentially proves \eqref{I = -infty}, except that the function may not satisfy the strict inequality $u>0$. But we can simply take $u+\epsilon$ with a constant $\epsilon>0$ as a trial function for the infimum and let $\epsilon\to 0_+$.

\subsubsection*{The case \eqref{even}}

Let $K \in \N$ be as in \eqref{even}. We shall show that \eqref{u test function} holds for the function 
\[ u(\omega) = \omega_{n+1}^{2K}. \]

The integral condition is simple. Using spherical coordinates with $\omega_{n+1}=\cos\theta$, $\theta\in[0,\pi]$, and changing variables $t=\cos\theta\in[-1,1]$ we obtain
$$
\int_{\Sph^n} u^{-\frac{2n}{2s-n}}\,d\omega = |\Sph^{n-1}| \int_0^\pi |\cos\theta|^{-\frac{4nK}{2s-n}} \sin^{n-1}\theta\,d\theta = |\Sph^{n-1}| \int_{-1}^1 |t|^{-\frac{4nK}{2s-n}} (1-t^2)^\frac{n-2}{2}\,dt \,.
$$
This integral is divergent if and only if $\frac{4nK}{2s-n} \geq 1$. In view of \eqref{even}, this is the case if and only if $n \geq 2$. 

Next, we show that $a_{2s}[u]<0$. We claim that
\begin{equation}
	\label{u expansion}
	u(\omega) = \sum_{k=0}^{K} c_k C_{2k}^{(\frac{n-1}{2})}(\omega_{n+1}) \,,
\end{equation}
where $C_\ell^{(\alpha)}$ are the Gegenbauer (or ultraspherical) polynomials and where $c_k\in\R$. It is well known (see, for instance, \cite[Thm. IV.2.14]{StWe}) that the function $C_{\ell}^{(\frac{n-1}{2})}(\omega_{n+1})$ is a spherical harmonic of degree $\ell$, namely a so-called zonal harmonic. 

Note that we claim that in the spherical harmonic expansion \eqref{u expansion} of $u$ there are only terms of even degree at most $K$. As noted above, the condition \eqref{even} then guarantees that $\alpha_{2s,n}(2k) < 0$ for all $k = 0,...,K$ and thus
$$
a_{2s}[u] = \sum_{k=0}^K \alpha_{2s,n}(2k)\, c_k^2 \|Y_{2k}\|_{L^2(\mathbb S^n)}^2 < 0 \,,
$$
as desired.

We recall two standard facts about the Gegenbauer polynomials. First, $C_\ell^{(\alpha)}$ is a polynomial of exact degree $\ell$ and second, $C_\ell^{(\alpha)}$ has the same parity as $\ell$, see \cite[Eq. 18.5.10]{DLMF}. That is, for every $k$, 
\[ C_{2k}^{(\frac{n-1}{2})}(t) = a_{k,k} t^{2k} + a_{k,k-1} t^{2k-2} + \ldots + a_{k,0} \]
with $a_{k,k}\neq 0$. 
Thus, the desired expansion
\begin{equation}
	\label{t^2K expansion}
	t^{2K} = \sum_{k=0}^K c_k\, C_{2k}^{(\frac{n-1}{2})}(t)
	\qquad\text{for all}\ t\in[-1,1] \,.
\end{equation}
can be equivalently rewritten, with respect to the basis $\{ t^{2K}, t^{2K-2}, \ldots, t^2, 1 \}$ of even polynomials on $[-1,1]$ of order at most $2K$, as 
\begin{equation*}
	\left(
	\begin{array}{cccc}
		a_{ K,K}& 0 &...  & 0\\
		a_{K,K-1}& a_{K-1,K-1} &...  & 0\\
		&  &...& \\
		a_{K,0}& a_{K-1,0} &...  &a_{0,0}
	\end{array}
	\right )
	\left(
	\begin{array}{cccc}
		c_{K}\\
		c_{K-1}\\
		... \\
		c_{0}
	\end{array}
	\right )
	=
	\left(
	\begin{array}{cccc}
		1\\
		0\\
		... \\
		0
	\end{array}
	\right ).
\end{equation*}
Since the matrix on the left side is of lower-triangular form with non-zero diagonal entries, its determinant is non-zero. Hence there are (unique) numbers $c_0,...,c_K \in \R$ such that \eqref{t^2K expansion}, and hence \eqref{u expansion}, holds. This completes the proof in the case \eqref{even}. 


\subsubsection*{The case \eqref{odd}}

In this case, the above argument becomes a bit more involved because we need to work with the more complicated test function
\[ u(\omega) := \omega_{n+1}^{2K} - \omega_{n+1}^{2K+1}, \]
where $K$ is as in \eqref{odd}. Indeed, since $\alpha_{2s,n}(\ell) < 0$ if and only if $\ell = 1,3,...,2K+1$, the test function needs to contain an `odd' component like $\omega_{n+1}^{2K+1}$ to achieve $a_{2s}[u]<0$. On the other hand, the `even' term $\omega_{n+1}^{2K}$ is necessary to ensure $u \geq 0$. 

Let us verify divergence of the integral. With the same change of variables as before we obtain
\begin{align*}
	\int_{\Sph^n} u^{-\frac{2n}{2s-n}}\,d\omega & = |\Sph^{n-1}| \int_0^\pi |\cos\theta|^{-\frac{4nK}{2s-n}} ( 1- \cos\theta)^{-\frac{2n}{2s-n}} \sin^{n-1}\theta \,d\theta \\
	& = |\Sph^{n-1}| \int_{-1}^1 |t|^{-\frac{4nK}{2s-n}} ( 1- t)^{-\frac{2n}{2s-n}} (1-t^2)^{\frac{n-2}{2}} \,dt \,.
\end{align*}
The integral is divergent (at $t=0$) if and only if $\frac{4nK}{2s-n}\geq 1$. In view of \eqref{odd}, this is the case if and only if $n\geq 2$.

Next, we show that $a_{2s}[u]< 0$. By using the properties of Gegenbauer polynomials as in the case \eqref{even}, we find  
\begin{equation}
	\label{t^2K, t^2K+1 expansion}
	t^{2K} = \sum_{k= 0}^K c_k C_{2k}^{(\frac{n-1}{2})} (t) \qquad \text{ and } \qquad t^{2K+1} = \sum_{k= 0}^K d_k C_{2k+1}^{(\frac{n-1}{2})} (t) 
\end{equation} 
for suitable coefficients $c_k, d_k \in \R$. (In fact, in this case we shall need to find $c_k$ and $d_k$ explicitly, see \eqref{c_k d_k relation} and Lemma \ref{lemma gegenbauer} below.)  Therefore 
\[ u(\omega) = \sum_{k=0}^K \left( c_k C_{2k}^{(\frac{n-1}{2})} (\omega_{n+1}) - d_k C_{2k+1}^{(\frac{n-1}{2})} (\omega_{n+1}) \right). \]
Since $a_{2s}$ is diagonal with respect to spherical harmonics, we thus obtain
\begin{align*}
	a_{2s,n}[u] =  \! \sum_{k=0}^K \! \left( \alpha_{2s,n}(2k) c_k^2 \|C_{2k}^{(\frac{n-1}{2})} (\omega_{n+1})\|_{L^2(\mathbb S^n)}^2 \!+ \alpha_{2s,n}(2k+1) d_k^2 \|C_{2k+1}^{(\frac{n-1}{2})} (\omega_{n+1})\|_{L^2(\mathbb S^n)}^2 \right)\!.
\end{align*}
By \eqref{eq:alpha}, we have the relation $\alpha_{2s,n}(2k+1) = - \frac{2s + n + 4k}{2s-n - 4k} \alpha_{2s,n}(2k)$, where $\frac{2s + n + 4k}{2s-n - 4k} > 0$ for all $k = 0,...,K$, thanks to \eqref{odd}. Hence 
\begin{equation}
	\label{a(u) difference}
	a_{2s,n}[u] = \!\sum_{k=0}^K \! \alpha_{2s,n}(2k) \! \left( \! c_k^2 \|C_{2k}^{(\frac{n-1}{2})} (\omega_{n+1})\|_{L^2(\mathbb S^n)}^2 - \frac{2s + n + 4k}{2s-n - 4k} d_k^2 \|C_{2k+1}^{(\frac{n-1}{2})} (\omega_{n+1})\|_{L^2(\mathbb S^n)}^2 \! \right)\!. 
\end{equation} 
In view of $\alpha_{2s,n}(2k) > 0$ for all $k=0,...,K$ by \eqref{odd}, the desired inequality $a_{2s}[u] < 0$ follows if we can show that the difference in brackets is strictly negative for every $k = 0,...,K$. To do so, we claim that the coefficients $c_k$ and $d_k$ are related by
\begin{equation}
	\label{c_k d_k relation}
	d_k = c_k \frac{(2K + 1) (4k + n +1)}{(2K + 2k + n + 1)(4k + n -1)}
\end{equation}
and the $L^2$-norms of the spherical harmonics by
\begin{equation}
	\label{L^2 norms relation}
	\|C_{2k+1}^{(\frac{n-1}{2})} (\omega_{n+1})\|_{L^2(\mathbb S^n)}^2 = \|C_{2k}^{(\frac{n-1}{2})} (\omega_{n+1})\|_{L^2(\mathbb S^n)}^2 \frac{(2k + n -1)(4k + n - 1)}{(2k+1)(4k+n + 1)}. 
\end{equation}
We defer the details of these computations to Lemma \ref{lemma gegenbauer} below.

Inserting \eqref{c_k d_k relation} and \eqref{L^2 norms relation} into \eqref{a(u) difference}, the inequality we need to verify reduces to 
\begin{equation}
	\label{ineq}
	1 - \frac{2s + n + 4k}{2s-n - 4k} \cdot \frac{(2K + 1)^2 (2k + n -1) (4k + n +1)}{(2K + 2k + n + 1)^2(2k + 1) (4k + n -1)}  < 0. 
\end{equation}
Since $2s - n < 4K + 4$ by \eqref{odd}, and since $t \mapsto  \frac{t + 2n + 4k}{t - 4k}$ is strictly decreasing, we can estimate 
\[ \frac{2s + n + 4k}{2s-n - 4k} > \frac{4K + 4 + 2n + 4k}{4K + 4 - 4k} = \frac{2K + 2 + n + 2k}{2K + 2 - 2k}. \]
Hence to show \eqref{ineq}, it suffices to prove 
\begin{equation}
	\label{ineq2}
	\frac{2K + 2 + n + 2k}{2K + 2 - 2k} \frac{(2K + 1)^2 (2k + n -1) (4k + n +1)}{(2K + 2k + n + 1)^2(2k + 1) (4k + n -1)}  \geq  1
\end{equation}
for all integers $n \geq 2$, $K \geq 1$ and $0 \leq k \leq K$. 

The rest of the proof will be devoted to establishing \eqref{ineq2} by considering several cases separately.

Let us first assume that $k \leq K-1$. We write the left side of \eqref{ineq2} as 
\[ \frac{(4k + n +1)(2K + 2 + n + 2k)}{(2K + 2k + n + 1)^2} \cdot \frac{2k + n -1}{4k + n -1} \cdot \frac{(2K + 1)^2}{(2K + 2 - 2k)(2k + 1)}, \]
and notice that the first factor is a decreasing function of $n\geq 2$ (see Lemma \ref{lemma monotonicity in n} below; here is where the assumption $k \leq K-1$ enters) and the second factor is a decreasing function of $n\geq 2$. Thus, if $k \leq K-1$, it suffices to prove \eqref{ineq2} for $n=2$, which we write as  
\begin{equation}
	\label{ineq n=2}
	\frac{F(K,k)}{G(K,k)} :=  \frac{4k + 3}{4k+1}\cdot \frac{2K + 1}{2K + 2k + 3} \cdot   \frac{2K + 1}{2K - 2k + 2} \cdot  \frac{2K + 2k +4}{2K  + 2k+ 3}  \geq  1. 
\end{equation}
If $k = 0$, then for all $K\geq 1$
\[ \frac{F(K,0)}{G(K,0)} = 3 \cdot \frac{(2K+1)^2}{(2K+3)^2} \cdot \frac{K+2}{K+1}
\geq 3 \cdot \frac{3^2}{5^2} \cdot 1 > 1 \,, \]
so we may assume $k \geq 1$ in the following. To solve this case, we resort to explicit calculation. We compute
\[ \frac 12 F(K,k) = (16k+12)K^3+(16k^2+60k+36)K^2+(16k^2+48k+27)K+(4k+3)(k+2) \]
and 
\begin{align*} \frac{1}{2} G(K,k) &= (16k+4)K^3+(16k^2+68k+16)K^2+(-16k^3+28k^2+92k+21)K \\
	&\qquad -(k-1)(2k+3)^2(4k+1).
\end{align*}
Hence for every $k \geq 1$, dropping the positive constant term and using $k \leq K-1$, we get
\begin{align*}
	\frac{1}{4K} (F(K,k)-G(K,k)) &\geq 4K^2+(10-4k)K+(8k^3-6k^2-22k+3) \\
	&\geq  8k^3 - 6 k^2 - 12 k + 13 =: P(k) \,.
\end{align*}
We have $P(k) \geq 0$ for all $1 \leq k \leq K-1$ because $P'(k) = 24k^2 - 12k - 12 \geq 0$ for $k \geq 1$ and $P(1) = 3 > 0$. This finishes the proof of \eqref{ineq n=2}, and hence of \eqref{ineq2}, in the case $k \leq K-1$.

Let us finally give the proof of \eqref{ineq2} when $k = K$. If $K \geq 2$, we estimate the left side of \eqref{ineq2} by
\begin{equation}
	\label{ineq k=K}
	\frac{2K + 1}{2} \cdot \frac{2K + n -1}{4K + n -1} \cdot \frac{4K + n + 2}{4K + n + 1}  \geq \frac{2K + 1}{2} \cdot \frac12 \geq \frac{5}{4} > 1. 
\end{equation}
If $K = 1$ and $n \geq 3$, since $ \frac{2K + n -1}{4K + n -1}$ is increasing in $n$, the left side of \eqref{ineq k=K} can be estimated by 
\[ \frac{3}{2} \cdot \frac{2 + n-1}{4 + n-1} \cdot \frac{4 + n + 2}{4 + n + 1} > \frac{3}{2}  \cdot \frac{4}{6} = 1. \]
Finally, if $K = 1$ and $n = 2$, by a direct calculation the left side of \eqref{ineq k=K} equals $\frac{36}{35} > 1$. 

The proof of Theorem \ref{main2} is now complete.
\qed

\medskip

We finally prove the two lemmas that we referred to in the proof.

\begin{lemma}
	\label{lemma gegenbauer}
	The coefficients $c_k$ and $d_k$ in \eqref{t^2K, t^2K+1 expansion} are given by 
	\begin{align*}
		c_{k} &=\frac{2^{-2K}(2k+\frac{n-1}{2})\Gamma(2K+1) \Gamma(\frac{n-1}{2})}{\Gamma(K+k+\frac{n+1}{2})\Gamma(K-k+1)}, \\
		d_k &= \frac{2^{-2K-1} (2k + \frac{n+1}{2})\Gamma(2K+2) \Gamma(\frac{n-1}{2})}{\Gamma(K+k+\frac{n+3}{2})\Gamma(K-k+1)}.
	\end{align*} 
	Moreover, 
	\begin{equation}
		\label{L^2 norm}
		\int_{-1}^1 |C_{\ell}^{(\frac{n-1}{2})}(t)|^2 (1 - t^2)^\frac{n-2}{2} \, dt = \frac{\pi \, 2^{2-n} \, \Gamma(n-1+\ell)}{\ell! \, (\ell+\frac{n-1}{2}) \, \Gamma(\frac{n-1}{2})^2}.
	\end{equation} 
	In particular, identities \eqref{c_k d_k relation} and \eqref{L^2 norms relation} hold. 
\end{lemma}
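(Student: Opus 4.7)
The plan is to reduce the lemma to two classical identities for the Gegenbauer polynomials and then perform Gamma-function bookkeeping. Writing $\alpha = \frac{n-1}{2}$, the $L^2$-norm formula \eqref{L^2 norm} is the standard orthogonality relation
$$
\int_{-1}^1 \left( C_\ell^{(\alpha)}(t) \right)^2 (1-t^2)^{\alpha - 1/2}\,dt = \frac{\pi\, 2^{1-2\alpha}\,\Gamma(\ell + 2\alpha)}{\ell!\,(\ell+\alpha)\,\Gamma(\alpha)^2},
$$
which is tabulated in DLMF 18.3 and can be obtained from the Rodrigues formula by integration by parts $\ell$ times. The substitution $\alpha = (n-1)/2$ gives \eqref{L^2 norm}.

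For $c_k$ and $d_k$ I would invoke the monomial expansion (DLMF 18.18.15)
$$
t^N = \frac{N!\,\Gamma(\alpha)}{2^N} \sum_{j=0}^{\lfloor N/2 \rfloor} \frac{N - 2j + \alpha}{j!\,\Gamma(N - j + \alpha + 1)}\, C_{N-2j}^{(\alpha)}(t),
$$
and apply it with $N = 2K$ and reindexing $j = K - k$ to get the stated form for $c_k$, and with $N = 2K+1$ and the same reindexing to get $d_k$. If one prefers a self-contained derivation, orthogonality gives $c_k = \|C_{2k}^{(\alpha)}\|^{-2} \int_{-1}^1 t^{2K} C_{2k}^{(\alpha)}(t)(1-t^2)^{\alpha-1/2}dt$; the integral is evaluated in closed form (for instance by expressing $C_{2k}^{(\alpha)}$ via its Rodrigues formula $(1-t^2)^{\alpha-1/2} C_{\ell}^{(\alpha)}(t) = \text{const} \cdot \frac{d^\ell}{dt^\ell}(1-t^2)^{\ell + \alpha - 1/2}$, integrating by parts $2k$ times, and reducing to a Beta integral), and the same computation works for $d_k$.

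Once the closed forms of $c_k$, $d_k$ and the $L^2$-norms are in hand, both \eqref{c_k d_k relation} and \eqref{L^2 norms relation} fall out by dividing and repeatedly applying $\Gamma(x+1) = x\,\Gamma(x)$. Concretely,
$$
\frac{d_k}{c_k} = \frac{\Gamma(2K+2)}{2\,\Gamma(2K+1)}\cdot \frac{2k + \frac{n+1}{2}}{2k + \frac{n-1}{2}}\cdot \frac{\Gamma(K+k+\frac{n+1}{2})}{\Gamma(K+k+\frac{n+3}{2})}
$$
collapses to $(2K+1)(4k+n+1)/[(2K+2k+n+1)(4k+n-1)]$, and the analogous telescoping yields \eqref{L^2 norms relation} from the quotient of the two $L^2$-norms.

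The main obstacle is purely bookkeeping: aligning the normalization conventions used in the sources cited for the two classical identities with the precise Gamma-function form written in the lemma. No analytic subtlety is involved; a careful line-by-line simplification of the $\Gamma$-quotients using $\Gamma(x+1)=x\Gamma(x)$ and the doubling constants $2k+\frac{n\pm 1}{2} = \frac{4k+n\pm 1}{2}$ suffices.
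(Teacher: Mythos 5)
Your proposal is correct and follows essentially the same route as the paper: both reduce the lemma to tabulated Gegenbauer identities (the DLMF orthogonality norm for \eqref{L^2 norm}, and for the coefficients either the monomial expansion 18.18.15 or, as in the paper and in your alternative, orthogonality combined with the integral $\int_{-1}^1 t^{N} C_{\ell}^{(\alpha)}(t)(1-t^2)^{\alpha-1/2}\,dt$ from 18.17.37), followed by $\Gamma$-function bookkeeping. Your stated reindexing $j=K-k$ and the quotient computations for \eqref{c_k d_k relation} and \eqref{L^2 norms relation} check out.
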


\begin{proof}
	Formula \eqref{L^2 norm} is the special case $\alpha = \frac{n-1}{2}$ of the following general formula \cite[Table 18.3.1]{DLMF}
	\[ \int_{-1}^1 |C_{\ell}^{(\alpha)}(t)|^2 (1 - t^2)^{\alpha - \frac12} \, dt = \frac{\pi 2^{1-2\alpha}\Gamma(2\alpha+ \ell)}{\ell!(\ell+\alpha)\Gamma(\alpha)^2}. \]
	Observing that by change of variables $t = \omega_{n+1}$, 
	\[ \|C_{\ell}^{(\frac{n-1}{2})}(\omega_n) \|_{L^2(\mathbb S^n)}^2 = |\mathbb S^{n-1}|  \int_{-1}^1 |C_{\ell}^{(\frac{n-1}{2})}(t)|^2 (1 - t^2)^{\frac{n-2}{2}} \, dt, \]
	identity \eqref{L^2 norms relation} follows readily from \eqref{L^2 norm} by a direct computation. 
	
	To obtain the expression for $c_k$, recall that, at fixed $\alpha$, the Gegenbauer polynomials $C_\ell^{(\alpha)}$ are pairwise orthogonal on the space $L^2\left((-1, 1), (1-t^2)^{\alpha - \frac12} \, dt\right)$, see e.g. \cite[Table 18.3.1]{DLMF}. Integrating $t^{2K} = \sum_{l= 0}^K c_l C_{2l}^{(\frac{n-1}{2})} (t)$ against $C_{2k}^{(\frac{n-1}{2})} (t) (1 - t^2)^\frac{n-2}{2}$, we thus find 
	\[ \int_{-1}^1 t^{2K} C_{2k}^{(\frac{n-1}{2})} (t) (1 - t^2)^\frac{n-2}{2} \, dt = c_k \int_{-1}^1 |C_{\ell}^{(\frac{n-1}{2})}(t)|^2 (1 - t^2)^\frac{n-2}{2} \, dt. \]
	The integral on the right side is given in \eqref{L^2 norm} and the integral on the left side is \cite[Eq. 18.17.37]{DLMF}
	\[  \int_{-1}^1 t^{2K} C_{2k}^{(\frac{n-1}{2})} (t) (1 - t^2)^\frac{n-2}{2} \, dt = \frac{\pi 2^{2-n-2K}\Gamma(2k+n-1)\Gamma(2K+1)}{(2k)!\Gamma(\frac{n-1}{2})\Gamma(K+k+\frac{n+1}{2})\Gamma(K-k+1)} \,. \]
	The expression for $d_k$ follows analogously, using again \eqref{L^2 norm} and \cite[Eq.~18.17.37]{DLMF}. Finally, a direct computation gives identity \eqref{c_k d_k relation}.  
\end{proof}

\begin{lemma}
	\label{lemma monotonicity in n}
	Suppose that $0 \leq k \leq K-1$. Then $n \mapsto \frac{(4k + n +1)(2K + 2 + n + 2k)}{(2K + 2k + n + 1)^2}$ is increasing in $n \geq 2$. 
\end{lemma}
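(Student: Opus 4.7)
The plan is to introduce the positive quantity $C(n) := 2K+2k+n+1$ appearing as the denominator factor and to rewrite each numerator factor as a small perturbation of $C(n)$. A direct inspection gives
$$
4k+n+1 = C(n) - (2K-2k) \quad\text{and}\quad 2K+2+n+2k = C(n) + 1,
$$
so that, setting $p := 2K-2k$,
$$
\frac{(4k+n+1)(2K+2+n+2k)}{(2K+2k+n+1)^2} = \frac{(C(n)-p)(C(n)+1)}{C(n)^2} = 1 - \frac{p-1}{C(n)} - \frac{p}{C(n)^2}.
$$

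Under the hypothesis $0 \leq k \leq K-1$ we have $p = 2K-2k \geq 2$, so both coefficients $p-1 \geq 1$ and $p \geq 2$ are strictly positive. Since $C(n) > 0$ is strictly increasing in $n$ for $n \geq 2$, both $\frac{p-1}{C(n)}$ and $\frac{p}{C(n)^2}$ are strictly decreasing, and hence their negatives add up to a strictly increasing function of $n$. This gives the desired monotonicity. The only nontrivial step is spotting the parametrization by $p$; once $p$ is identified the conclusion is immediate and requires no case analysis in $n$ or $k$. (As a sanity check, letting $n \to \infty$ sends the expression to $1$ from below, consistent with $p \geq 2 > 1$.)
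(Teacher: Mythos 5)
Your proof is correct and takes essentially the same route as the paper's: the paper writes the quotient as $\bigl(1-\tfrac{c-a}{c+n}\bigr)\bigl(1+\tfrac{b-c}{c+n}\bigr)=1+\tfrac{a+b-2c}{c+n}-\tfrac{(c-a)(b-c)}{(c+n)^2}$ with $a=4k+1$, $b=2K+2k+2$, $c=2K+2k+1$, which with $p=c-a=2K-2k$ and $b-c=1$ is exactly your decomposition, and monotonicity follows term by term from $p\geq 2$ in both arguments.
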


\begin{proof}
	More generally, let $f(n) := \frac{(a+n)(b+n)}{(c+n)^2}$ with $a,b,c \geq -1$, $a \leq c \leq b$. We claim that if 
	\begin{equation}
		\label{suff cond mon}
		c-a \geq b-c,
	\end{equation}
	then $f(n)$ is increasing in $n \geq 2$, unless when $a=b=c$. Applying this claim with $a = 4k+1$, $b = 2K + 2k + 2$ and $c = 2K + 2k + 1$, condition \eqref{suff cond mon} becomes $k \leq K - \frac12$ and the lemma follows. 
	
	To prove the claim, write
	\[ f(n) = \left( 1 - \frac{c-a}{c+n} \right) \left( 1 + \frac{b-c}{c+n} \right) = 1 + \frac{a + b- 2c}{c+n} - \frac{(c-a)(b-c)}{(c+n)^2}. \]
	The last summand is nonpositive by assumption, hence nondecreasing in $n$. The second summand is nondecreasing in $n$ if $a + b - 2c \leq 0$, which is just \eqref{suff cond mon}. If $a=b=c$ does not hold, then one of the summands is increasing. 
\end{proof}


\end{document}